\documentclass{amsart}
\usepackage{amssymb}
\usepackage{amsmath}
\usepackage{amsfonts}

\setcounter{MaxMatrixCols}{10}

\newtheorem{theorem}{Theorem}
\theoremstyle{plain}
\newtheorem{acknowledgement}{Acknowledgement}

\newtheorem{definition}{Definition}
\newtheorem{example}{Example}

\newtheorem{lemma}{Lemma}

\newtheorem{remark}{Remark}

\numberwithin{equation}{section}
\input{tcilatex}

\begin{document}
\title[$\lambda -$statistical convergent function sequences in
intuitionistic fuzzy normed spaces]{$\lambda -$statistical convergent
function sequences in intuitionistic fuzzy normed spaces}
\author{Vatan Karakaya}
\address{Department of Mathematical Engineering, Yildiz Technical
University, Davutpasa Campus, Esenler, 34750 Istanbul}
\email{vkkaya@yildiz.edu.tr;vkkaya@yahoo.com}
\author{Necip Simsek}
\address{Department of Mathematics, Istanbul Commerce University,
Uskudar,Istanbul, Turkey}
\email{necsimsek@yahoo.com; necipsimsek@hotmail.com}
\author{M\"{u}zeyyen Ert\"{u}rk}
\address{Department of Mathematics, Yildiz Technical University, Davutpasa
Campus, Esenler, 34750 Istanbul, Turkey}
\email{merturk@yildiz.edu.tr; merturk3263@gmail.com}
\author{Faik G\"{u}rsoy}
\address{Department of Mathematics, Yildiz Technical University, Davutpasa
Campus, Esenler, 34750 Istanbul, Turkey}
\email{fgursoy@yildiz.edu.tr; faikgursoy02@hotmail.com}
\maketitle

\section{Introduction}

\bigskip Fuzzy logic was introduced by Zadeh \cite{Zadeh} in 1965. Since
then, the importance of fuzzy logic has come increasingly to the
present.There are many applications of fuzzy logic in the field of science
and engineering, e.g. population dynamics \cite{Barros}, chaos control \cite%
{Feng,Fradkov}, computer programming \cite{Giles}, nonlinear dynamical
systems \cite{Hong}, etc. The concept of intuitionistic fuzzy set, as a
generalization of fuzzy logic, was introduced by Atanassov \cite{Atanassov}
in 1986.

Quite recently Park \cite{Park} has introduced the concept of intuitionistic
fuzzy metric space and in \cite{Saadati} , Saadati and Park studied the
notion of intuitionistic fuzzy normed space. Intuitionistic fuzzy analogues
of many concept in classical analysis was studied by many authors \cite%
{mursaleen},\cite{Mursaleen},\cite{Rsaadati},\cite{jebril},\cite{Dinda} etc.

The concept of statistical convergence was introduced by Fast \cite{Fast}.
Mursaleen defined $\lambda -$statistical convergence in\cite{Muhammed}. Also
the concept of statistical convergence was studied in intuitionistic fuzzy
normed space in \cite{Karakus}..Quite recently, Karakaya et al. \cite{vkkaya}
defined and studied statistical convergence of function sequences in
intuitionistic fuzzy normed spaces. Mohiuddine and Lohani defined and
studied $\lambda $-statistical convergence in intuitionistic fuzzy normed
spaces \cite{Lohani}.

In this paper, we shall study concept $\lambda $-statistical convergence for
function sequences and investigate some basic properties related to the
concept in intuitionistic fuzzy normed space.

\begin{acknowledgement}
This work is supported by The Scientific and Technological Research Council
of Turkey (TUBITAK) under the project number 110T699.
\end{acknowledgement}

\begin{definition}
\cite{schw} A binary operation $\ast :\left[ 0,1\right] \times \left[ 0,1%
\right] \rightarrow \left[ 0,1\right] $ is said to be a continuous t-norm if
it satisfies the following conditions:

(i) $\ast $ is associative and commutative ,

(ii) $\ast $ is continuous,

(iii) $a\ast 1=a$ for all $a\in \left[ 0,1\right] $ ,

(iv) \ $a\ast c\leq b\ast d$ whenever $a\leq \ b$\ and $c\leq d$\ for each $%
a,b,c,d\in \left[ 0,1\right] $
\end{definition}

For example, $a\ast b=a.b$\ is a continuous t-norm.

\begin{definition}
\ \cite{schw} A binary operation $\diamond :\left[ 0,1\right] \times \left[
0,1\right] \rightarrow \left[ 0,1\right] $ is said to be a continuous
t-conorm if it satisfies the following conditions:

(i) $\diamond $ is associative and commutative ,

(ii) $\diamond $ is continuous,

(iii) $a\diamond 0=a$ for all $a\in \left[ 0,1\right] $ ,

(iv) \ $a\diamond c\leq b\diamond d$ whenever $a\leq b$\ and $c\leq d$\ for
each $a,b,c,d\in \left[ 0,1\right] $
\end{definition}

For example, $a\diamond b=\min \left\{ a+b,1\right\} $\ is a continuous
t-norm.

\begin{definition}
\cite{Saadati} Let $\ast $\ be a continuous t-norm, $\diamond $ be a
continuous t-conorm and $X$\ be a linear space over the field IF($%
\mathbb{R}
$ or $%
\mathbb{C}
$) . If $\mu $ and $\nu $\ are fuzzy sets on $X\times \left( 0,\infty
\right) $ satisfying the following conditions, the five-tuple $\left( X,\mu
,\nu ,\ast ,\diamond \right) $ is said to be an intuitionistic fuzzy normed
space and $\left( \mu ,\nu \right) $ is called an intuitionistic fuzzy norm.
For every $x,y\in X$\ and $s,t>0$,

(i) $\mu \left( x,t\right) +\nu \left( x,t\right) \leq 1$ ,

(ii) $\mu \left( x,t\right) >0$,

(iii) $\mu \left( x,t\right) =1\Longleftrightarrow x=0$,

(iv) $\mu \left( ax,t\right) =$ $\mu \left( x,\frac{t}{\left\vert
a\right\vert }\right) $\ for each $a\neq 0$,

(v) $\mu \left( x,t\right) \ast $ $\mu \left( y,s\right) \leq $ $\mu \left(
x+y,t+s\right) $,

(vi) $\mu \left( x,.\right) :\left( 0,\infty \right) \rightarrow \left[ 0,1%
\right] $\ is continuous.

(vii) $\underset{t\rightarrow \infty }{\lim }\mu \left( x,t\right) =1$ and $%
\underset{t\rightarrow 0}{\lim }\mu \left( x,t\right) =0,$

(viii) $\nu \left( x,t\right) <1$,

(ix) $\nu \left( x,t\right) =0\Longleftrightarrow x=0$,

(x) $\nu \left( ax,t\right) =$ $\nu \left( x,\frac{t}{\left\vert
a\right\vert }\right) $\ for each $a\neq 0$,

(xi) $\nu \left( x,t\right) \diamond $ $\nu \left( y,s\right) \geq $ $\nu
\left( x+y,t+s\right) $,

(xii)$\nu \left( x,.\right) :\left( 0,\infty \right) \rightarrow \left[ 0,1%
\right] $is continuous.

(xiii) $\underset{t\rightarrow \infty }{\lim }\nu \left( x,t\right) =1$ and $%
\underset{t\rightarrow 0}{\lim }\nu \left( x,t\right) =0,$
\end{definition}

\textit{For the intuitionistic fuzzy normed space }$\left( X,\mu ,\nu ,\ast
,\diamond \right) ,$\textit{\ as given in \ Dinda and Samanta \cite{Dinda},
we further assume that }$\mu ,\nu ,\ast ,,\diamond $\textit{\ satisfy the
following axiom:}

\textit{(xiv) }$\ \ \ \ \ \left. 
\begin{array}{c}
a\diamond a=a \\ 
a\ast a=a%
\end{array}%
\right\} $\textit{\ for all }$a\in \left[ 0,1\right] .$

\begin{definition}
\cite{Saadati}\textbf{\ }Let $\left( X,\mu ,\nu ,\ast ,\diamond \right) $\
be intuitionistic fuzzy normed space and $\left( x_{k}\right) $\ be sequence
in $X$. $\left( x_{k}\right) $ is said to be convergent to $L\in X$ with
respect to the intuitionistic fuzzy norm $\left( \mu ,\nu \right) $ if for
every $\varepsilon >0$\ and $t>0$, there exists a positive integer $k_{0}$\
such that $\mu \left( x_{k}-L,t\right) >1-\varepsilon $\ and $\nu \left(
x_{k}-L,t\right) <\varepsilon $\ \ whenever $k>k_{0}$. In this case we write 
$\left( \mu ,\nu \right) -\lim x_{k}=L$\ as $k\rightarrow \infty $.
\end{definition}

\begin{definition}
\cite{Saadati} Let $\left( X,\mu ,\nu ,\ast ,\diamond \right) $ be an
intuitionistic fuzzy normed space. For t \TEXTsymbol{>} 0, we define open
ball $B\left( x,r,t\right) $ with center $x\in X$ and radius $0<r<1$, as

\begin{equation*}
B\left( x,r,t\right) =\left\{ y\in X:\mu \left( x-y,t\right) >1-r\text{,}\nu
\left( x-y,t\right) <r\right\}
\end{equation*}
\end{definition}

\begin{definition}
\cite{Dinda} Let $\left( X,\mu ,\nu ,\ast ,\diamond \right) $ and $\left(
Y,\mu ^{\prime },\nu ^{\prime },\ast ,\diamond \right) $\ be two
intuitionistic fuzzy normad linear space over the same field IF. A mapping $%
f $ \ from $\left( X,\mu ,\nu ,\ast ,\diamond \right) $ to $\left( Y,\mu
^{\prime },\nu ^{\prime },\ast ,\diamond \right) $ is said to be
intuitionistic fuzzy continuous at $x_{0}\in X$, if for any given $%
\varepsilon >0,a\in \left( 0,1\right) ,\exists \delta =\delta \left(
a,\varepsilon \right) ,\beta =\beta \left( a,\varepsilon \right) \in \left(
0,1\right) $\ such that for all $x\in X$,%
\begin{equation*}
\mu \left( x-x_{0},\delta \right) >1-\beta \Longrightarrow \mu \prime \left(
f\left( x\right) -f\left( x_{0}\right) ,\varepsilon \right) >1-a
\end{equation*}%
$,$ and $\nu \left( x-x_{0},\delta \right) <\beta \Longrightarrow \nu \prime
\left( f\left( x\right) -f\left( x_{0}\right) ,\varepsilon \right) <a.$
\end{definition}

\begin{definition}
\cite{Dinda} Let $f_{k}:\left( X,\mu ,\nu ,\ast ,\diamond \right)
\rightarrow \left( Y,\mu ^{\prime },\nu ^{\prime },\ast ,\diamond \right) $\
be a sequence of functions. The sequence $\left( f_{k}\right) $ is said to
be pointwise intuitionistic fuzzy convergent on $X$ to a function $f$ with
respect to $\left( \mu ,\nu \right) $ if for each $x\in X$, the sequence $%
\left( f_{k}\left( x\right) \right) \ $is convergent to $f\left( x\right) $
with respect to $\left( \mu \prime ,\nu \prime \right) $.
\end{definition}

\begin{definition}
\cite{Dinda}\textbf{\ }Let $f_{k}:\left( X,\mu ,\nu ,\ast ,\diamond \right)
\rightarrow \left( Y,\mu ^{\prime },\nu ^{\prime },\ast ,\diamond \right) $\
be a sequence of functions. The sequence $\left( f_{k}\right) $ is said to
be uniformly intuitionistic fuzzy convergent on $X$\ to a function $f$ with
respect to $\left( \mu ,\nu \right) $, if given $0<r<1,t>0$, there exist a
positive integer $k_{0}=k_{0}\left( r,t\right) $\ such that $\forall x\in X$
and $\forall k>k_{0}$,%
\begin{equation*}
\mu \prime \left( f_{k}\left( x\right) -f\left( x\right) ,t\right) >1-r,\
\nu \prime \left( f_{k}\left( x\right) -f\left( x\right) ,t\right) <r.
\end{equation*}
\end{definition}

Now, we recall the notion of the statistical convergence of sequences in
intuititonistic fuzzy normed spaces.

\begin{definition}
\cite{freedman}\textbf{\ }Let $K\subset 
\mathbb{N}
$\ and $K_{n}=\left\{ k\in K:k\leq n\right\} $.Then the natural density is
defined by $\delta \left( K\right) =\underset{n\rightarrow \infty }{\lim }%
\frac{\left\vert K_{n}\right\vert }{n}$, where $\left\vert K_{n}\right\vert $
denotes the cardinality of \ $K_{n}$.
\end{definition}

\begin{definition}
\cite{Fridy} A sequence $x=\left( x_{k}\right) $ is said to be statistically
convergent to the number $L$ if for every $\varepsilon >0,$ the set $%
N(\varepsilon )$ has asymptotic density zero, where%
\begin{equation*}
N(\varepsilon )=\left\{ k\in 
\mathbb{N}
:\left\vert x_{k}-L\right\vert \geq \varepsilon \right\} \text{.}
\end{equation*}%
This case is stated by $st-\lim x=L.$
\end{definition}

\begin{definition}
Let A be subset of $%
\mathbb{N}
.$ If a property P(k) holds for all $k\in A$ with $\delta (A)=1$, we say
that P holds for almost all k, that is a.a.k

\begin{definition}
\cite{Karakus}$\ $Let $\left( X,\mu ,\nu ,\ast ,\diamond \right) $ be an
intuitionistic fuzzy normed space. Then, a sequence $\left( x_{k}\right) $
is said to be \ statistically convergent to $L\in X$ with respect to
intuitionistic fuzzy norm \ $\left( \mu ,\nu \right) $ provided that for
every $\varepsilon >0$ and $t>0,$%
\begin{equation*}
\delta \left( \left\{ k\in 
\mathbb{N}
:\mu \left( x_{k}-L,t\right) \leq 1-\varepsilon \text{ or }\nu \left(
x_{k}-L,t\right) \geq \varepsilon \right\} \right) =0
\end{equation*}%
or equivalently%
\begin{equation*}
\underset{n\rightarrow \infty }{\lim }\frac{1}{n}\left\vert \left\{ k\leq
n:\mu \left( x_{k}-L,t\right) \leq 1-\varepsilon \text{ or }\nu \left(
x_{k}-L,t\right) \geq \varepsilon \right\} \right\vert =0
\end{equation*}%
This case is stated by $st_{\mu ,\nu }-\lim \left( x_{k}\right) =L$.
\end{definition}

\begin{definition}
\cite{vkkaya}Let $\left( X,\mu ,\nu ,\ast ,\diamond \right) $ and $\left(
Y,\mu ^{\prime },\nu ^{\prime },\ast ,\diamond \right) $ be two
intuitionistic fuzzy normed linear spaces over the same field IF and $\
f_{k}:\left( X,\mu ,\nu ,\ast ,\diamond \right) \rightarrow \left( Y,\mu
^{\prime },\nu ^{\prime },\ast ,\diamond \right) $\ be a sequence of
functions. If for each $x\in X$ and $\forall \varepsilon >0,t>0,$%
\begin{equation*}
\delta \left( \left\{ k\in 
\mathbb{N}
:\mu ^{\prime }\left( f_{k}\left( x\right) -f\left( x\right) ,t\right) \leq
1-\varepsilon \text{ or }\nu ^{\prime }\left( f_{k}\left( x\right) -f\left(
x\right) ,t\right) \geq \varepsilon \right\} \right) =0,
\end{equation*}%
then we say hat the sequence $\left( f_{k}\right) $ is pointwise
statistically convergent to $f$\ with respect to intuitionistic fuzzy norm $%
\left( \mu ,\nu \right) $ and we write it $st_{\mu ,\nu }-f_{k}\rightarrow f$%
.
\end{definition}

i.e., for each $x\in X$, $\ \mu ^{\prime }\left( f_{k}\left( x\right)
-f\left( x\right) ,t\right) >1-\varepsilon $ \ and $\ \nu ^{\prime }\left(
f_{k}\left( x\right) -f\left( x\right) ,t\right) <\varepsilon $ $\ \ \ a.a.k$%
.
\end{definition}

\begin{definition}
\cite{vkkaya}Let $\left( X,\mu ,\nu ,\ast ,\diamond \right) $ and $\left(
Y,\mu ^{\prime },\nu ^{\prime },\ast ,\diamond \right) $ be two
intuitionistic fuzzy normed linear space over the same field IF and $\
f_{k}:\left( X,\mu ,\nu ,\ast ,\diamond \right) \rightarrow \left( Y,\mu
^{\prime },\nu ^{\prime },\ast ,\diamond \right) $\ be a sequence of
functions. If for every $x\in X$ and $\forall \varepsilon >0,t>0,$%
\begin{equation*}
\ \delta \left( \left\{ k\in 
\mathbb{N}
:\mu ^{\prime }\left( f_{k}\left( x\right) -f\left( x\right) ,t\right) \leq
1-\varepsilon \text{ or }\nu ^{\prime }\left( f_{k}\left( x\right) -f\left(
x\right) ,t\right) \geq \varepsilon \right\} \right) =0,
\end{equation*}%
we say that the sequence $\left( f_{k}\right) $ is uniformly statistically
convergent with respect to $f$\ intuitionistic fuzzy norm $\left( \mu ,\nu
\right) $ and we write it $st_{\mu ,\nu }-f_{k}\rightrightarrows f.$
\end{definition}

\begin{definition}
\cite{Karakus}Let $\left( X,\mu ,\nu ,\ast ,\diamond \right) $ be an
intuitionistic fuzzy normed space. A sequence $\left( x_{k}\right) $ is said
to be statistically Cauchy with respect to intuitionistic fuzzy norm \ $%
\left( \mu ,\nu \right) $ provided that for every $\varepsilon >0$ and $t>0$%
, there exists a number $m\in 
\mathbb{N}
$ satisfying%
\begin{equation*}
\delta \left( \left\{ k\in 
\mathbb{N}
:\mu \left( x_{k}-x_{m},t\right) \leq 1-\varepsilon \text{ or }\nu \left(
x_{k}-x_{m},t\right) \geq \varepsilon \right\} \right) =0.
\end{equation*}

\begin{definition}
\cite{Muhammed} Let $\lambda =\left( \lambda _{n}\right) $ be a
non-decreasing sequence of positive numbers tending \ to $\infty $ such that%
\begin{equation*}
\lambda _{n+1}\leq \lambda _{n},\text{ \ \ \ \ \ \ }\lambda _{1}=0.
\end{equation*}

Let $K\subset 
\mathbb{N}
.$The number 
\begin{equation*}
\delta _{\lambda }\left( K\right) =\underset{n\rightarrow \infty }{\lim }%
\frac{1}{\lambda _{n}}\left\vert \left\{ k\in I_{n}:k\in K\right\}
\right\vert
\end{equation*}

is said to be $\lambda -density$ of $K$, where $I_{n}=\left[ n-\lambda
_{n}+1,n\right] .$

If $\lambda _{n}=n$ for every $n$ then $\lambda -density$ is reduced to
asymtotic density.
\end{definition}

\begin{definition}
\cite{Muhammed} A sequence $x=\left( x_{k}\right) $ is said to be $\lambda -$%
statistically convergent to the number $L$ if for every $\varepsilon >0,$
the set $N(\varepsilon )$ has $\lambda -$density zero, where%
\begin{equation*}
N(\varepsilon )=\left\{ k\in 
\mathbb{N}
:\left\vert x_{k}-L\right\vert \geq \varepsilon \right\} \text{.}
\end{equation*}

This case is stated by $st_{\lambda }-\lim x=L.$
\end{definition}
\end{definition}

\begin{definition}
\cite{Lohani}$\ $Let $\left( X,\mu ,\nu ,\ast ,\diamond \right) $ be an
intuitionistic fuzzy normed space. Then, a sequence $\left( x_{k}\right) $
is said to be $\lambda -$\ statistically convergent to $L\in X$ with respect
to intuitionistic fuzzy norm \ $\left( \mu ,\nu \right) $ provided that for
every $\varepsilon >0$ and $t>0,$%
\begin{equation*}
\delta _{\lambda }\left( \left\{ k\in 
\mathbb{N}
:\mu \left( x_{k}-L,t\right) \leq 1-\varepsilon \text{ or }\nu \left(
x_{k}-L,t\right) \geq \varepsilon \right\} \right) =0
\end{equation*}%
or equivalently%
\begin{equation*}
\underset{n\rightarrow \infty }{\lim \delta _{\lambda }}\left( \left\{ k\in 
\mathbb{N}
:\mu \left( x_{k}-L,t\right) >1-\varepsilon \text{ and }\nu \left(
x_{k}-L,t\right) <\varepsilon \right\} \right) =1
\end{equation*}%
This case is stated by $st_{\mu ,\nu }^{\lambda }-\lim x=L$.
\end{definition}

\section{$\protect\lambda -$Statistical Convergence of Sequence of Functions
in Intuitionistic Fuzzy Normed Spaces}

In this section, we define pointwise $\lambda -$statistical and uniformly $%
\lambda -$statistical convergent sequences of functions in intuitionistic
fuzzy normed spaces. Also, we give the $\lambda -$statistical analog of the
Cauchy convergence criterion for pointwise and uniformly $\lambda -$%
statistical convergent in intuitionistic fuzzy normed space.We investigate
relationship of these concepts with continuity.

\subsection{\textbf{Pointwise }$\protect\lambda -$\textbf{Statistical
Convergence on intuitionistic fuzzy normed spaces }}

\begin{definition}
Let $\left( X,\mu ,\nu ,\ast ,\diamond \right) $ and $\left( Y,\mu ^{\prime
},\nu ^{\prime },\ast ,\diamond \right) $ be two intuitionistic fuzzy normed
linear spaces over the same field IF and $\ f_{k}:\left( X,\mu ,\nu ,\ast
,\diamond \right) \rightarrow \left( Y,\mu ^{\prime },\nu ^{\prime },\ast
,\diamond \right) $\ be a sequence of functions. If for each $x\in X$ and $%
\forall \varepsilon >0,t>0,$%
\begin{equation*}
\delta _{\lambda }\left( \left\{ k\in 
\mathbb{N}
:\mu ^{\prime }\left( f_{k}\left( x\right) -f\left( x\right) ,t\right) \leq
1-\varepsilon \text{ or }\nu ^{\prime }\left( f_{k}\left( x\right) -f\left(
x\right) ,t\right) \geq \varepsilon \right\} \right) =0,
\end{equation*}%
or equivalently%
\begin{equation*}
\delta _{\lambda }\left( \left\{ k\in 
\mathbb{N}
:\mu ^{\prime }\left( f_{k}\left( x\right) -f\left( x\right) ,t\right)
>1-\varepsilon \text{ and }\nu ^{\prime }\left( f_{k}\left( x\right)
-f\left( x\right) ,t\right) <\varepsilon \right\} \right) =1
\end{equation*}

then we say hat the sequence $\left( f_{k}\right) $ is pointwise $\lambda -$%
statistically convergent with respect to intuitionistic fuzzy norm $\left(
\mu ,\nu \right) $ and we write it $st_{\mu ,\nu }^{\lambda
}-f_{k}\rightarrow f$ .
\end{definition}

\begin{remark}
\item Let $f_{k}:\left( X,\mu ,\nu ,\ast ,\diamond \right) \rightarrow
\left( Y,\mu ^{\prime },\nu ^{\prime },\ast ,\diamond \right) $\ be a
sequence of functions. If $\lambda _{n}=n$ for every $n$ ,since $\lambda
-density$ is reduced to asymtotic density, then$\ \left( f_{k}\right) $ is
pointwise statistically convergent on $X$ with respect to $\left( \mu ,\nu
\right) $ i.e. $st_{\mu ,\nu }-f_{k}\rightarrow f$.
\end{remark}

\begin{lemma}
Let $f_{k}:\left( X,\mu ,\nu ,\ast ,\diamond \right) \rightarrow \left(
Y,\mu ^{\prime },\nu ^{\prime },\ast ,\diamond \right) $\ be a sequence of
functions.Then for every $\varepsilon >0$ and $t>0$, the following
statements are equivalent:
\end{lemma}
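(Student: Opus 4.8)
=== PROOF PROPOSAL ===

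The lemma is the standard ``$\varepsilon$--$t$ characterization'' of pointwise $\lambda$-statistical convergence, so the expected statement lists several equivalent conditions: (a) $st_{\mu,\nu}^{\lambda}-f_k\to f$; (b) $\delta_{\lambda}\bigl(\{k:\mu'(f_k(x)-f(x),t)\le 1-\varepsilon\}\bigr)=0=\delta_{\lambda}\bigl(\{k:\nu'(f_k(x)-f(x),t)\ge\varepsilon\}\bigr)$; (c) $\delta_{\lambda}\bigl(\{k:\mu'(f_k(x)-f(x),t)>1-\varepsilon\ \text{and}\ \nu'(f_k(x)-f(x),t)<\varepsilon\}\bigr)=1$; and (d) $st_{\lambda}-\lim\mu'(f_k(x)-f(x),t)=1$ and $st_{\lambda}-\lim\nu'(f_k(x)-f(x),t)=0$. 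The plan is to prove the cycle (a) $\Leftrightarrow$ (b) $\Leftrightarrow$ (c) $\Leftrightarrow$ (d), fixing an arbitrary $x\in X$, $\varepsilon>0$, $t>0$ throughout, so that everything reduces to a statement about a single scalar sequence.

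First I would handle (a) $\Leftrightarrow$ (c): this is immediate from the definition of $\lambda$-density, since for any set $A\subset\mathbb{N}$ one has $\delta_{\lambda}(A)=0$ iff $\delta_{\lambda}(\mathbb{N}\setminus A)=1$ — indeed $\frac{1}{\lambda_n}|\{k\in I_n:k\in A\}|+\frac{1}{\lambda_n}|\{k\in I_n:k\notin A\}|=1$ for every $n$, so the two limits exist simultaneously and sum to $1$. Here $A$ is the ``bad'' set and $\mathbb{N}\setminus A$ is the ``good'' set appearing in the second displayed form of Definition (pointwise $\lambda$-statistical convergence). Next, for (b) $\Leftrightarrow$ (a), I would use the inclusion
\begin{equation*}
\{k:\mu'(f_k(x)-f(x),t)\le 1-\varepsilon\}\ \cup\ \{k:\nu'(f_k(x)-f(x),t)\ge\varepsilon\}\ =\ \{k:\mu'(f_k(x)-f(x),t)\le 1-\varepsilon\ \text{or}\ \nu'(f_k(x)-f(x),t)\ge\varepsilon\},
\end{equation*}
together with the elementary facts that $\delta_{\lambda}$ is monotone (if $A\subset B$ and $\delta_{\lambda}(B)=0$ then $\delta_{\lambda}(A)=0$) and subadditive on the finitely many sets whose densities are known to be zero ($\delta_{\lambda}(A\cup B)\le\delta_{\lambda}(A)+\delta_{\lambda}(B)$ when both right-hand densities exist). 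One direction is monotonicity applied to each piece; the converse is subadditivity. Finally (b) $\Leftrightarrow$ (d) is just a restatement: $\mu'(f_k(x)-f(x),t)\le 1-\varepsilon$ says exactly that the scalar $\mu'(f_k(x)-f(x),t)$ is at distance $\ge\varepsilon$ from $1$ (using $\mu'\le 1$), and similarly for $\nu'$ and $0$, so having $\lambda$-density zero for these sets is precisely Definition ($\lambda$-statistical convergence of scalar sequences) applied to $\bigl(\mu'(f_k(x)-f(x),t)\bigr)_k$ and $\bigl(\nu'(f_k(x)-f(x),t)\bigr)_k$.

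I do not expect a serious obstacle: the whole lemma is a bookkeeping exercise in the two set-theoretic identities (union-of-bad-sets and complement) plus monotonicity and subadditivity of $\lambda$-density, none of which uses the t-norm, the t-conorm, or axiom (xiv) — it is purely about densities of subsets of $\mathbb{N}$. The only point requiring a word of care is that subadditivity of $\delta_{\lambda}$ is not automatic because $\lambda$-density need not exist for arbitrary sets; but in each application both summands are sets already known (or assumed) to have density zero, for which $\frac{1}{\lambda_n}|\{k\in I_n:\cdot\}|\to 0$, and then $\limsup_n\frac{1}{\lambda_n}|(A\cup B)\cap I_n|\le\lim_n\frac{1}{\lambda_n}|A\cap I_n|+\lim_n\frac{1}{\lambda_n}|B\cap I_n|=0$ forces the density of $A\cup B$ to exist and equal $0$. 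I would state this as a one-line observation and then run the equivalence cycle.
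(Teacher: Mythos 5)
Your proposal is correct, and in fact the paper states this lemma without any proof at all, so your argument supplies exactly the routine density bookkeeping (complementation within $I_n$, monotonicity, and subadditivity of $\delta_{\lambda}$ for sets already known to have density zero) that the authors evidently intended. Your explicit remark that subadditivity must be justified via a $\limsup$ estimate, because $\lambda$-density need not exist for arbitrary subsets of $\mathbb{N}$, is a point of care the paper never addresses and is worth keeping.
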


(i)$st_{\mu ,\nu }^{\lambda }-f_{k}\rightarrow f$.

(ii)For each $x\in X,$ 
\begin{equation*}
\delta _{\lambda }\left\{ k\in 
\mathbb{N}
:\mu ^{\prime }\left( f_{k}\left( x\right) -f\left( x\right) ,t\right) \leq
1-\varepsilon \text{ }\right\} =\delta _{\lambda }\left\{ k\in 
\mathbb{N}
:\text{ }\nu ^{\prime }\left( f_{k}\left( x\right) -f\left( x\right)
,t\right) \geq \varepsilon \right\} =0
\end{equation*}

(iii)$\delta _{\lambda }\left\{ k\in 
\mathbb{N}
:\mu ^{\prime }\left( f_{k}\left( x\right) -f\left( x\right) ,t\right)
>1-\varepsilon \text{ and }\nu ^{\prime }\left( f_{k}\left( x\right)
-f\left( x\right) ,t\right) <\varepsilon \right\} =1$

(iv)For each $x\in X,$ 
\begin{equation*}
\delta _{\lambda }\left\{ k\in 
\mathbb{N}
:\mu ^{\prime }\left( f_{k}\left( x\right) -f\left( x\right) ,t\right)
>1-\varepsilon \text{ }\right\} =\delta _{\lambda }\left\{ k\in 
\mathbb{N}
:\text{ }\nu ^{\prime }\left( f_{k}\left( x\right) -f\left( x\right)
,t\right) <\varepsilon \right\} =1
\end{equation*}

(v)For each $x\in X,$ 
\begin{equation*}
st_{\lambda }-lim\mu ^{\prime }(f_{k}\left( x\right) -f\left( x\right) ,t)=1%
\text{ and }st_{\lambda }-lim\nu ^{\prime }(f_{k}\left( x\right) -f\left(
x\right) ,t)=0.
\end{equation*}

$\bigskip $

\begin{example}
Let $\left( 
\mathbb{R}
,\left\vert \cdot \right\vert \right) $ denote the space of real numbers
with the usual norm, and let $a\ast b=a.b$ and $a\diamond b=\min \left\{
a+b,1\right\} $ for $a,b\in \left[ 0,1\right] $. For all $x\in 
\mathbb{R}
$ and every $t>0,$ consider%
\begin{equation*}
\mu \left( x,t\right) =\frac{t}{t+\left\vert x\right\vert }\text{ and }\nu
\left( x,t\right) =\frac{\left\vert x\right\vert }{t+\left\vert x\right\vert 
}
\end{equation*}%
In this case $\left( 
\mathbb{R}
,\mu ,\nu ,\ast ,\diamond \right) $ is intuitionistic fuzzy normed space.
Let $f_{k}:\left[ 0,1\right] \rightarrow 
\mathbb{R}
$ be a sequence of functions whose terms are given by%
\begin{equation*}
f_{k}(x)=\left\{ 
\begin{array}{c}
\text{ \ \ \ \ }x^{k}+1\text{, \ \ \ \ \ \ \ \ \ for }0\leq x<\frac{1}{2}%
\text{, if }n-\sqrt{\lambda _{n}}+1\leq k\leq n \\ 
\text{ }0\text{\ ,\ \ \ \ \ \ \ \ \ \ \ \ \ \ \ \ \ \ for }0\leq x<\frac{1}{2%
}\text{, otherwise} \\ 
\text{ \ \ \ \ \ \ \ \ }x^{k}+\frac{1}{2},\text{ \ \ \ \ \ \ \ \ for }\frac{1%
}{2}\leq x<1\text{, if if }n-\sqrt{\lambda _{n}}+1\leq k\leq n \\ 
1,\text{ \ \ \ \ \ \ \ \ \ \ \ \ \ \ \ \ \ \ for }\frac{1}{2}\leq x<1,\text{%
otherwise} \\ 
\text{ }2,\text{ \ \ \ \ \ \ \ \ \ \ \ \ \ \ \ \ \ \ \ \ \ \ \ \ \ \ \ \ \ \
\ \ \ \ \ \ \ \ for }x=1%
\end{array}%
\right. \text{\ \ \ .\ \ \ \ \ \ \ }
\end{equation*}%
$(f_{k})$ is pointwise $\lambda -statistical$ convergent on $\left[ 0,1%
\right] $ with respect to intuitionistic fuzzy norm $(\mu ,\nu ).$ Because,
for $0\leq x<\frac{1}{2}$, $\ $since 
\begin{equation*}
K\left( \varepsilon ,t\right) =\left\{ k\in 
\mathbb{N}
:\text{ }\mu \left( f_{k}(x)-f(x),t\right) \leq 1-\varepsilon \text{ or }\nu
\left( f_{k}(x)-f(x),t\right) \geq \varepsilon \right\} ,
\end{equation*}%
hence 
\begin{eqnarray*}
K\left( \varepsilon ,t\right)  &=&\left\{ k\in I_{n}:\frac{t}{t+\left\vert
f_{k}(x)-0\right\vert }\leq 1-\varepsilon \text{ or }\frac{\left\vert
f_{k}(x)-0\right\vert }{t+\left\vert f_{k}(x)-0\right\vert }\geq \varepsilon
\right\}  \\
&=&\left\{ k\in I_{n}:\left\vert f_{k}(x)\right\vert \geq \frac{\varepsilon t%
}{1-\varepsilon }\right\}  \\
&=&\left\{ k\in I_{n}:f_{k}(x)=x^{k}+1\right\} 
\end{eqnarray*}%
and 
\begin{equation*}
\left\vert K\left( \varepsilon ,t\right) \right\vert \leq \sqrt{_{\lambda
_{n}}}
\end{equation*}%
Thus, for $0\leq x<\frac{1}{2}$, since 
\begin{equation*}
\delta _{\lambda }\left( K\left( \varepsilon ,t\right) \right) =\underset{%
n\rightarrow \infty }{\lim }\frac{\left\vert K\left( \varepsilon ,t\right)
\right\vert }{\lambda _{n}}=\underset{n\rightarrow \infty }{\lim }\frac{%
\sqrt{_{\lambda _{n}}}}{\lambda _{n}}=0
\end{equation*}%
$f_{k}$ is $\lambda -statistical$ convergent to $0$ with respect to
intuitionistic fuzzy norm $(\mu ,\nu )$.
\end{example}

For $\frac{1}{2}\leq x<1,$%
\begin{eqnarray*}
K^{\prime }\left( \varepsilon ,t\right) &=&\left\{ k\in I_{n}:\frac{t}{%
t+\left\vert f_{k}(x)-1\right\vert }\leq 1-\varepsilon \text{ or }\frac{%
\left\vert f_{k}(x)-1\right\vert }{t+\left\vert f_{k}(x)-1\right\vert }\geq
\varepsilon \right\} \\
&=&\left\{ k\in I_{n}:\left\vert f_{k}(x)-1\right\vert \geq \frac{%
\varepsilon t}{1-\varepsilon }\right\} \\
&=&\left\{ k\in I_{n}:f_{k}(x)=x^{k}+\frac{1}{2}\right\}
\end{eqnarray*}

\bigskip and 
\begin{equation*}
\left\vert K^{\prime }\left( \varepsilon ,t\right) \right\vert \leq \sqrt{%
_{\lambda _{n}}}
\end{equation*}

Thus, for $0\leq x<\frac{1}{2}$, $f_{k}$ is $\lambda -statistical$
convergent to $1$with respect to intuitionistic fuzzy norm $(\mu ,\nu )$.

For $x=1,$ it can be seen easly that

\begin{eqnarray*}
K^{\prime \prime }\left( \varepsilon ,t\right)  &=&\left\{ k\in 
\mathbb{N}
:\frac{t}{t+\left\vert f_{k}(x)-2\right\vert }\leq 1-\varepsilon \text{ or }%
\frac{\left\vert f_{k}(x)-2\right\vert }{t+\left\vert f_{k}(x)-2\right\vert }%
\geq \varepsilon \right\}  \\
&=&\left\{ n\in 
\mathbb{N}
:0\geq \frac{\varepsilon t}{1-\varepsilon }\right\}  \\
&=&\oslash 
\end{eqnarray*}

and%
\begin{equation*}
\left\vert K^{\prime \prime }\left( \varepsilon ,t\right) \right\vert =0
\end{equation*}%
and%
\begin{equation*}
\delta _{\lambda }\left( K^{\prime \prime }\left( \varepsilon ,t\right)
\right) =\underset{n\rightarrow \infty }{\lim }\frac{\left\vert K^{\prime
\prime }\left( \varepsilon ,t\right) \right\vert }{\lambda _{n}}=\underset{%
n\rightarrow \infty }{\lim }\frac{0}{\lambda _{n}}=0.
\end{equation*}

Thus for $x=1$, $f_{k}$ is $\lambda -statistical$ convergent to $2$ with
respect to intuitionistic fuzzy norm $(\mu ,\nu )$

\begin{theorem}
Let $\left( X,\mu ,\nu ,\ast ,\diamond \right) $ be an intuitionistic fuzzy
normed space and $f_{k}:\left( X,\mu ,\nu ,\ast ,\diamond \right)
\rightarrow \left( Y,\mu ^{\prime },\nu ^{\prime },\ast ,\diamond \right) $\
be a sequence of functions.If sequence $\left( f_{k}\right) $ is pointwise
intuitionistic fuzzy convergent on $X$ to a function $f$ with respect to $%
\left( \mu ,\nu \right) ,$ then $\left( f_{k}\right) $ is pointwise $\lambda
-$statistical convergent with respect to intuitionistic fuzzy norm $\left(
\mu ,\nu \right) .$
\end{theorem}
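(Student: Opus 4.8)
The plan is to exploit the elementary fact that ordinary (intuitionistic fuzzy) convergence forces the exceptional set of indices to be finite, together with the observation that every finite subset of $\mathbb{N}$ has $\lambda$-density zero because $\lambda_n\to\infty$.

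First I would fix an arbitrary $x\in X$, $\varepsilon>0$ and $t>0$. Since $(f_k)$ is pointwise intuitionistic fuzzy convergent on $X$ to $f$ with respect to $(\mu,\nu)$, the sequence $(f_k(x))$ converges to $f(x)$ with respect to $(\mu',\nu')$ in the sense of the convergence definition for intuitionistic fuzzy normed spaces. Hence there is a positive integer $k_0=k_0(x,\varepsilon,t)$ such that $\mu'(f_k(x)-f(x),t)>1-\varepsilon$ and $\nu'(f_k(x)-f(x),t)<\varepsilon$ for every $k>k_0$.

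Next I would consider the set
\[
K(\varepsilon ,t)=\left\{ k\in\mathbb{N}:\mu'\left(f_k(x)-f(x),t\right)\leq 1-\varepsilon \text{ or } \nu'\left(f_k(x)-f(x),t\right)\geq \varepsilon \right\}.
\]
By the previous step $K(\varepsilon ,t)\subseteq\{1,2,\dots,k_0\}$, so it is finite; write $p=\left|K(\varepsilon ,t)\right|\leq k_0$. Then for every $n$ we have $\left|\{k\in I_n:k\in K(\varepsilon ,t)\}\right|\leq p$, and therefore
\[
\delta_{\lambda}\left(K(\varepsilon ,t)\right)=\lim_{n\to\infty}\frac{1}{\lambda_n}\left|\{k\in I_n:k\in K(\varepsilon ,t)\}\right|\leq \lim_{n\to\infty}\frac{p}{\lambda_n}=0,
\]
where the last equality uses $\lambda_n\to\infty$. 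Since $x$, $\varepsilon$ and $t$ were arbitrary, this is precisely the defining condition for $(f_k)$ to be pointwise $\lambda$-statistically convergent to $f$ with respect to $(\mu,\nu)$, i.e. $st_{\mu ,\nu}^{\lambda}-f_k\rightarrow f$.

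There is essentially no serious obstacle; the only point requiring a moment's care is the claim that a finite set has $\lambda$-density zero, which is exactly where the hypothesis $\lambda_n\to\infty$ from the definition of $\lambda$ enters. Alternatively one could route the argument through part (v) of the preceding Lemma, reducing to the known fact that ordinary convergence of the real sequences $\mu'(f_k(x)-f(x),t)\to 1$ and $\nu'(f_k(x)-f(x),t)\to 0$ implies their $\lambda$-statistical convergence, but the direct counting argument above is the shortest path.
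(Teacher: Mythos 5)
Your proof is correct and follows essentially the same route as the paper's: pointwise intuitionistic fuzzy convergence makes the exceptional set $K(\varepsilon ,t)$ finite, and a finite subset of $\mathbb{N}$ has $\lambda$-density zero since $\lambda _{n}\rightarrow \infty $. Your write-up is in fact slightly more careful than the paper's, since you make explicit the counting bound $\left\vert \left\{ k\in I_{n}:k\in K(\varepsilon ,t)\right\} \right\vert \leq p$ that justifies the density-zero claim.
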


\begin{proof}
Let $\forall k\in $ $%
\mathbb{N}
$ and $\left( f_{k}\right) $ be pointwise intuitionistic fuzzy convergent in 
$X$. In this case the sequence $\left( f_{k}(x)\right) $ is convergent with
respect to $\left( \mu ^{\prime },\nu ^{\prime }\right) $ for each $x\in X.$
Then for every $\varepsilon >0$ and $t>0,$ there is number $k_{0}\in 
\mathbb{N}
$ such that%
\begin{equation*}
\mu ^{\prime }\left( f_{k}\left( x\right) -f\left( x\right) ,t\right)
>1-\varepsilon \text{ and }\nu ^{\prime }\left( f_{k}\left( x\right)
-f\left( x\right) ,t\right) <\varepsilon 
\end{equation*}

for all $\forall k\geq k_{0}$ and for each $x\in X.$Hence for each $x\in X$
the set%
\begin{equation*}
\left\{ k\in 
\mathbb{N}
:\mu ^{\prime }\left( f_{k}\left( x\right) -f\left( x\right) ,t\right) \leq
1-\varepsilon \text{ or }\nu ^{\prime }\left( f_{k}\left( x\right) -f\left(
x\right) ,t\right) \geq \varepsilon \right\}
\end{equation*}

has finite numbers of terms. Since finite subset of $%
\mathbb{N}
$\ has $\lambda -$density $0$ and hence 
\begin{equation*}
\delta _{\lambda }\left( \left\{ k\in 
\mathbb{N}
:\mu ^{\prime }\left( f_{k}\left( x\right) -f\left( x\right) ,t\right) \leq
1-\varepsilon \text{ or }\nu ^{\prime }\left( f_{k}\left( x\right) -f\left(
x\right) ,t\right) \geq \varepsilon \right\} \right) =0.
\end{equation*}

That is, $st_{\mu ,\nu }^{\lambda }-f_{k}\rightarrow f$.
\end{proof}

\begin{theorem}
Let $\left( f_{k}\right) $ and $\left( g_{k}\right) $ be two sequences of
functions from intuitionistic fuzzy normed space $\left( X,\mu ,\nu ,\ast
,\diamond \right) $ to $\left( Y,\mu ^{\prime },\nu ^{\prime },\ast
,\diamond \right) .$ If $st_{\mu ,\nu }^{\lambda }-f_{k}\rightarrow f$ and $%
st_{\mu ,\nu }^{\lambda }-g_{k}\rightarrow g$, then $st_{\mu ,\nu }^{\lambda
}-\left( \alpha f_{k}+\beta g_{k}\right) \rightarrow \alpha f+\beta g$ where 
$\alpha ,\beta \in IF$($%
\mathbb{R}
$ or $%
\mathbb{C}
$).

\begin{proof}
The proof is clear for $\alpha =0$ and $\beta =0.$Now let $\alpha \neq 0$
and $\beta \neq 0.$ Since $st_{\mu ,\nu }^{\lambda }-f_{k}\rightarrow f$ and 
$st_{\mu ,\nu }^{\lambda }-g_{k}\rightarrow g,$ for each $x\in X$ if we
define 
\begin{equation*}
A_{1}=\left\{ k\in 
\mathbb{N}
:\mu ^{\prime }\left( f_{k}\left( x\right) -f\left( x\right) ,\frac{t}{%
2\left\vert \alpha \right\vert }\right) \leq 1-\varepsilon \text{ or }\nu
^{\prime }\left( f_{k}\left( x\right) -f\left( x\right) ,\frac{t}{%
2\left\vert \alpha \right\vert }\right) \geq \varepsilon \right\} 
\end{equation*}%
and 
\begin{equation*}
A_{2}=\left\{ k\in 
\mathbb{N}
:\ \mu ^{\prime }\left( g_{k}\left( x\right) -g\left( x\right) ,\frac{t}{%
2\left\vert \beta \right\vert }\right) \leq 1-\varepsilon \text{ or }\nu
^{\prime }\left( g_{k}\left( x\right) -g\left( x\right) ,\frac{t}{%
2\left\vert \beta \right\vert }\right) \geq \varepsilon \right\} 
\end{equation*}%
then $\ \ \ $%
\begin{equation*}
\delta _{\lambda }\left( A_{1}\right) \text{ }=0\ \text{\ }\ \text{and}\
\delta _{\lambda }\left( A_{2}\right) \text{ }=0\ .
\end{equation*}%
$\ \ \ \ \ \ \ \ \ \ \ \ \ \ \ $

$\ $Since $\delta _{\lambda }\left( A_{1}\right) $ $=0\ $\ $\ $and$\ \delta
_{\lambda }\left( A_{2}\right) $ $=0$,\ if we state $A$ by $\left( A_{1}\cup
A_{2}\right) $ then%
\begin{equation*}
\delta _{\lambda }\left( A\right) =0.
\end{equation*}%
Hence $A_{1}\cup A_{2}\neq 
\mathbb{N}
$ and there exists $\exists k_{0}\in 
\mathbb{N}
$ such that%
\begin{eqnarray*}
\mu ^{\prime }\left( f_{k_{0}}\left( x\right) -f\left( x\right) ,\frac{t}{%
2\left\vert \alpha \right\vert }\right) &>&1-\varepsilon ,\nu ^{\prime
}\left( f_{k_{0}}\left( x\right) -f\left( x\right) ,\frac{t}{2\left\vert
\alpha \right\vert }\right) <\varepsilon , \\
\mu ^{\prime }\left( g_{k_{0}}\left( x\right) -g\left( x\right) ,\frac{t}{%
2\left\vert \beta \right\vert }\right) &>&1-\varepsilon \text{ and }\nu
^{\prime }\left( g_{k_{0}}\left( x\right) -g\left( x\right) ,\frac{t}{%
2\left\vert \beta \right\vert }\right) <\varepsilon
\end{eqnarray*}%
Let%
\begin{eqnarray*}
B &=&\left\{ k\in 
\mathbb{N}
:\mu ^{\prime }\left( \left( \alpha f_{k}+\beta g_{k}\right) (x)-\left(
\alpha f(x)+\beta g(x)\right) ,t\right) >1-\varepsilon \text{ and }\right. \\
&&\text{ \ \ \ \ \ \ \ \ \ \ \ \ \ \ \ \ \ \ \ \ \ \ \ \ \ \ \ \ \ \ \ }%
\left. \nu ^{\prime }\left( \left( \alpha f_{k}+\beta g_{k}\right)
(x)-\left( \alpha f(x)+\beta g(x)\right) ,t\right) <\varepsilon \right\} .
\end{eqnarray*}%
We shall show that for each $x\in X$ 
\begin{equation*}
A^{c}\subset B
\end{equation*}%
Let $k_{0}\in A^{c}.$In this case 
\begin{equation*}
\mu ^{\prime }\left( f_{k_{0}}\left( x\right) -f\left( x\right) ,\frac{t}{%
2\left\vert \alpha \right\vert }\right) >1-\varepsilon ,\nu ^{\prime }\left(
f_{k_{0}}\left( x\right) -f\left( x\right) ,\frac{t}{2\left\vert \alpha
\right\vert }\right) <\varepsilon ,
\end{equation*}%
and 
\begin{equation*}
\mu ^{\prime }\left( g_{k_{0}}\left( x\right) -g\left( x\right) ,\frac{t}{%
2\left\vert \beta \right\vert }\right) >1-\varepsilon \text{ and }\nu
^{\prime }\left( g_{k_{0}}\left( x\right) -g\left( x\right) ,\frac{t}{%
2\left\vert \beta \right\vert }\right) <\varepsilon .
\end{equation*}%
Using those above, we have 
\begin{eqnarray*}
\mu ^{\prime }\left( \left( \alpha f_{k_{0}}+\beta g_{k_{0}}\right)
(x)-\left( \alpha f(x)+\beta g(x)\right) ,t\right) &\geq &\mu ^{\prime
}\left( \alpha f_{k_{0}}\left( x\right) -\alpha f(x),\frac{t}{2}\right) \ast
\mu ^{\prime }\left( \beta g_{k_{0}}\left( x\right) -\beta g(x),\frac{t}{2}%
\right) \\
&=&\mu ^{\prime }\left( f_{k_{0}}\left( x\right) -f(x),\frac{t}{2\left\vert
\alpha \right\vert }\right) \ast \mu ^{\prime }\left( g_{k_{0}}\left(
x\right) -g(x),\frac{t}{2\left\vert \beta \right\vert }\right) \\
&>&\left( 1-\varepsilon \right) \ast \left( 1-\varepsilon \right) \\
&=&\left( 1-\varepsilon \right)
\end{eqnarray*}

$\ \ \ \ \ \ \ \ \ \ \ \ \ \ \ \ \ \ \ \ \ \ \ \ \ \ \ \ \ \ \ \ \ \ \ \ \ \
\ \ \ \ \ \ \ \ \ \ \ \ \ \ \ \ \ \ $

$\ $and

\begin{eqnarray*}
\nu ^{\prime }\left( \left( \alpha f_{k_{0}}+\beta g_{k_{0}}\right)
(x)-\left( \alpha f(x)+\beta g(x)\right) ,t\right) &\leq &\nu ^{\prime
}\left( \alpha f_{k_{0}}\left( x\right) -\alpha f(x),\frac{t}{2}\right) \ast
\nu ^{\prime }\left( \beta g_{k_{0}}\left( x\right) -\beta g(x),\frac{t}{2}%
\right) \\
&=&\nu ^{\prime }\left( f_{k_{0}}\left( x\right) -f(x),\frac{t}{2\left\vert
\alpha \right\vert }\right) \ast \nu ^{\prime }\left( g_{k_{0}}\left(
x\right) -g(x),\frac{t}{2\left\vert \beta \right\vert }\right) \\
\ \ \ &<&\varepsilon \diamond \varepsilon \\
&=&\varepsilon
\end{eqnarray*}

\ \ \ \ \ \ \ \ This implies that 
\begin{equation*}
A^{c}\subset B.
\end{equation*}%
Since $B^{c}\subset A$ and $\delta _{\lambda }\left( A\right) $ $=0,$ hence%
\begin{equation*}
\delta _{\lambda }\left( B^{c}\right) =0
\end{equation*}%
That is 
\begin{equation*}
\delta _{\lambda }\left( \left\{ k\in 
\mathbb{N}
:\mu ^{\prime }\left( \left( \alpha f_{k}+\beta g_{k}\right) (x)-\left(
\alpha f(x)+\beta g(x)\right) ,t\right) \leq 1-\varepsilon \text{ and }\nu
^{\prime }\left( \left( \alpha f_{k}+\beta g_{k}\right) (x)-\left( \alpha
f(x)+\beta g(x)\right) ,t\right) \geq \varepsilon \right\} \right) =0
\end{equation*}%
$\ \ \ \ \ \ \ \ \ \ \ \ \ \ \ \ \ \ \ \ \ \ \ \ \ \ \ \ \ \ \ \ \ \ \ \ \ \
\ \ \ \ \ \ \ \ \ \ \ \ \ \ \ \ \ \ \ \ \ \ \ \ \ \ \ \ \ \ \ \ \ \ \ $%
\begin{equation*}
st_{\mu ,\nu }^{\lambda }-\left( \alpha f_{k}+\beta g_{k}\right) \rightarrow
\alpha f+\beta g
\end{equation*}%
\ \ \ 
\end{proof}
\end{theorem}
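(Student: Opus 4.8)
The plan is to mirror the structure of the preceding two proofs. First I would dispose of the degenerate cases: if $\alpha=\beta=0$ the combined sequence is identically $0=\alpha f+\beta g$ and there is nothing to prove, while if exactly one of $\alpha,\beta$ vanishes the claim reduces to the single-sequence scalar-multiple version of the argument below. So assume $\alpha\neq 0$ and $\beta\neq 0$, and fix $x\in X$, $\varepsilon>0$, $t>0$.

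Next I would introduce the two exceptional sets
\[
A_1=\Bigl\{k\in\mathbb{N}:\mu'\bigl(f_k(x)-f(x),\tfrac{t}{2|\alpha|}\bigr)\le 1-\varepsilon\ \text{or}\ \nu'\bigl(f_k(x)-f(x),\tfrac{t}{2|\alpha|}\bigr)\ge\varepsilon\Bigr\},
\]
\[
A_2=\Bigl\{k\in\mathbb{N}:\mu'\bigl(g_k(x)-g(x),\tfrac{t}{2|\beta|}\bigr)\le 1-\varepsilon\ \text{or}\ \nu'\bigl(g_k(x)-g(x),\tfrac{t}{2|\beta|}\bigr)\ge\varepsilon\Bigr\}.
\]
By the two hypotheses $\delta_\lambda(A_1)=\delta_\lambda(A_2)=0$, hence for $A:=A_1\cup A_2$ we get $\delta_\lambda(A)=0$, since for each $n$ the count $|\{k\in I_n:k\in A\}|$ is at most $|\{k\in I_n:k\in A_1\}|+|\{k\in I_n:k\in A_2\}|$ and both quotients by $\lambda_n$ tend to $0$.

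Now set
\[
B=\Bigl\{k\in\mathbb{N}:\mu'\bigl((\alpha f_k+\beta g_k)(x)-(\alpha f+\beta g)(x),t\bigr)>1-\varepsilon\ \text{and}\ \nu'\bigl((\alpha f_k+\beta g_k)(x)-(\alpha f+\beta g)(x),t\bigr)<\varepsilon\Bigr\}
\]
and prove $A^c\subset B$. If $k\in A^c$ then $k\notin A_1\cup A_2$, so the four strict inequalities $\mu'(f_k(x)-f(x),\tfrac{t}{2|\alpha|})>1-\varepsilon$, $\nu'(f_k(x)-f(x),\tfrac{t}{2|\alpha|})<\varepsilon$, $\mu'(g_k(x)-g(x),\tfrac{t}{2|\beta|})>1-\varepsilon$, $\nu'(g_k(x)-g(x),\tfrac{t}{2|\beta|})<\varepsilon$ all hold. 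Writing $(\alpha f_k+\beta g_k)(x)-(\alpha f+\beta g)(x)=\alpha(f_k(x)-f(x))+\beta(g_k(x)-g(x))$, splitting $t=\tfrac t2+\tfrac t2$, applying property~(v) of the intuitionistic fuzzy norm and then property~(iv) to absorb the scalars, we obtain
\[
\mu'\bigl((\alpha f_k+\beta g_k)(x)-(\alpha f+\beta g)(x),t\bigr)\ge\mu'\bigl(f_k(x)-f(x),\tfrac{t}{2|\alpha|}\bigr)\ast\mu'\bigl(g_k(x)-g(x),\tfrac{t}{2|\beta|}\bigr)>(1-\varepsilon)\ast(1-\varepsilon)=1-\varepsilon,
\]
the last equality being the idempotency axiom~(xiv); the mirror-image estimate using properties~(xi) and~(x) together with $\varepsilon\diamond\varepsilon=\varepsilon$ gives $\nu'\bigl((\alpha f_k+\beta g_k)(x)-(\alpha f+\beta g)(x),t\bigr)<\varepsilon$. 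Hence $k\in B$, so $A^c\subset B$, i.e.\ $B^c\subset A$.

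Finally, $B^c\subset A$ and $\delta_\lambda(A)=0$ force $\delta_\lambda(B^c)=0$ by monotonicity of the $\lambda$-density, which is exactly $st_{\mu,\nu}^{\lambda}-(\alpha f_k+\beta g_k)\to\alpha f+\beta g$ at the point $x$; since $x\in X$ was arbitrary, the theorem follows. The only genuinely delicate point is the chain of inequalities: one must split $t$ evenly into the t-norm inequality~(v) and the t-conorm inequality~(xi), rewrite the scalar multiples via~(iv) and~(x), and invoke the idempotency axiom~(xiv) to collapse $(1-\varepsilon)\ast(1-\varepsilon)$ and $\varepsilon\diamond\varepsilon$ back to $1-\varepsilon$ and $\varepsilon$; everything else is the standard ``finite union of $\lambda$-density-zero sets'' and complement bookkeeping already used in the previous proofs.
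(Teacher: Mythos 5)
Your proposal is correct and follows essentially the same route as the paper's own proof: the same exceptional sets $A_1,A_2$ at parameters $\tfrac{t}{2|\alpha|},\tfrac{t}{2|\beta|}$, the union argument, the inclusion $A^{c}\subset B$ via properties (v), (xi), (iv), (x) and the idempotency axiom (xiv). If anything, you are slightly more careful than the paper in justifying the density of the union by subadditivity and in correctly pairing $\nu'$ with $\diamond$ throughout.
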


\begin{definition}
\ \ Let $f_{k}:\left( X,\mu ,\nu ,\ast ,\diamond \right) \rightarrow \left(
Y,\mu ^{\prime },\nu ^{\prime },\ast ,\diamond \right) $\ be a sequence of
functions.The sequence $\left( f_{k}\right) $ is a pointwise $\lambda -$%
statistical Cauchy sequence in intuitionistic fuzzy normed space provided
that for every $\varepsilon >0$ and $t>0$ there exists a number $%
N=N(x,\varepsilon ,t)$ such that

\begin{equation*}
\ \delta _{\lambda }\left( \left\{ k\in 
\mathbb{N}
:\mu ^{\prime }\left( f_{k}\left( x\right) -f_{N}\left( x\right) ,t\right)
\leq 1-\varepsilon \text{ or }\nu ^{\prime }\left( f_{k}\left( x\right)
-f_{N}\left( x\right) ,t\right) \geq \varepsilon \text{ for each }x\in
X\right\} \right) =0.
\end{equation*}

\begin{theorem}
Let $f_{k}:\left( X,\mu ,\nu ,\ast ,\diamond \right) \rightarrow \left(
Y,\mu ^{\prime },\nu ^{\prime },\ast ,\diamond \right) $\ be a sequence of
functions.If $\left( f_{k}\right) $ is a pointwise $\lambda -$statistical
convergent sequence with respect to intuitionistic fuzzy norm $\left( \mu
,\nu \right) $, then $\left( f_{k}\right) $is a pointwise $\lambda -$%
statistical Cauchy sequence with respect to intuitionistic fuzzy norm $%
\left( \mu ,\nu \right) .$

\begin{proof}
Suppose that $st_{\mu ,\nu }^{\lambda }-f_{k}\rightarrow f$ and let $%
\varepsilon >0$,$t>0$ . For a given $\varepsilon >0,$ choose $s>0$ such that 
$\left( 1-\varepsilon \right) \ast \left( 1-\varepsilon \right) >1-s$ and $%
\varepsilon \diamond \varepsilon <s.$If we state respectively $A_{x}\left(
\varepsilon ,t\right) $ and $A_{x}^{c}\left( \varepsilon ,t\right) $ by 
\begin{equation*}
\left\{ k\in 
\mathbb{N}
:\mu ^{\prime }\left( f_{k}\left( x\right) -f\left( x\right) ,\frac{t}{2}%
\right) \leq 1-\varepsilon \text{ or }\nu ^{\prime }\left( f_{k}\left(
x\right) -f\left( x\right) ,\frac{t}{2}\right) \geq \varepsilon \right\} ,
\end{equation*}%
\begin{equation*}
\left\{ k\in 
\mathbb{N}
:\mu ^{\prime }\left( f_{k}\left( x\right) -f\left( x\right) ,\frac{t}{2}%
\right) >1-\varepsilon \text{ and }\nu ^{\prime }\left( f_{k}\left( x\right)
-f\left( x\right) ,\frac{t}{2}\right) <\varepsilon \right\} 
\end{equation*}%
for each $x\in X$ .Then, we have 
\begin{equation*}
\delta _{\lambda }\left( A_{x}\left( \varepsilon ,t\right) \right) =0
\end{equation*}

which implies that 
\begin{equation*}
\delta _{\lambda }\left( A_{x}^{c}\left( \varepsilon ,t\right) \right) =1
\end{equation*}%
Let $N\in A_{x}^{c}\left( \varepsilon ,t\right) .$Then%
\begin{equation*}
\mu ^{\prime }\left( f_{N}\left( x\right) -f\left( x\right) ,\frac{t}{2}%
\right) >1-\varepsilon \text{ and }\nu ^{\prime }\left( f_{N}\left( x\right)
-f\left( x\right) ,\frac{t}{2}\right) <\varepsilon 
\end{equation*}%
We want to show that there exists a number $N=N(x,\varepsilon ,t)$ such that 
\begin{equation*}
\ \delta _{\lambda }\left( \left\{ k\in 
\mathbb{N}
:\mu ^{\prime }\left( f_{k}\left( x\right) -f_{N}\left( x\right) ,t\right)
\leq 1-s\text{ or }\nu ^{\prime }\left( f_{k}\left( x\right) -f_{N}\left(
x\right) ,t\right) \geq s\text{ for each }x\in X\right\} \right) =0.
\end{equation*}%
Therefore, define for each $x\in X,$%
\begin{equation*}
\ B_{x}\left( \varepsilon ,t\right) =\left\{ k\in 
\mathbb{N}
:\mu ^{\prime }\left( f_{k}\left( x\right) -f_{N}\left( x\right) ,t\right)
\leq 1-s\text{ or }\nu ^{\prime }\left( f_{k}\left( x\right) -f_{N}\left(
x\right) ,t\right) \geq s\text{ }\right\} .
\end{equation*}%
We have to show that 
\begin{equation*}
\ B_{x}\left( \varepsilon ,t\right) \subset A_{x}\left( \varepsilon
,t\right) .
\end{equation*}%
Suppose that 
\begin{equation*}
\ B_{x}\left( \varepsilon ,t\right) \nsubseteq A_{x}\left( \varepsilon
,t\right) .
\end{equation*}%
In this case $\ B_{x}\left( \varepsilon ,t\right) $ has at least one
different element which $A_{x}\left( \varepsilon ,t\right) $ doesn't has.
Let $k\in B_{x}\left( \varepsilon ,t\right) \diagdown A_{x}\left(
\varepsilon ,t\right) .$ Then we have 
\begin{equation*}
\mu ^{\prime }\left( f_{k}\left( x\right) -f_{N}\left( x\right) ,t\right)
\leq 1-s\text{ and }\mu ^{\prime }\left( f_{k}\left( x\right) -f\left(
x\right) ,\frac{t}{2}\right) >1-\varepsilon ,
\end{equation*}
in particularly $\mu ^{\prime }\left( f_{N}\left( x\right) -f\left( x\right)
,\frac{t}{2}\right) >1-\varepsilon .$ In this case%
\begin{eqnarray*}
1-s &\geq &\mu ^{\prime }\left( f_{k}\left( x\right) -f_{N}\left( x\right)
,t\right) \geq \mu ^{\prime }\left( f_{k}\left( x\right) -f\left( x\right) ,%
\frac{t}{2}\right) \ast \mu ^{\prime }\left( f_{N}\left( x\right) -f\left(
x\right) ,\frac{t}{2}\right)  \\
\text{ \ \ \ \ \ \ \ \ \ \ \ \ \ \ \ \ \ \ \ \ \ \ \ \ \ \ \ \ \ \ \ \ \ }
&\geq &\left( 1-\varepsilon \right) \ast \left( 1-\varepsilon \right) >1-s,
\end{eqnarray*}%
which is not possible. On the other hand 
\begin{equation*}
\nu ^{\prime }\left( f_{k}\left( x\right) -f_{N}\left( x\right) ,t\right)
\geq s\text{ and }\nu ^{\prime }\left( f_{k}\left( x\right) -f\left(
x\right) ,t\right) <\varepsilon ,
\end{equation*}%
in particularly $\nu ^{\prime }\left( f_{N}\left( x\right) -f\left( x\right)
,t\right) <\varepsilon .$In this case 
\begin{equation*}
\text{ \ \ \ \ \ \ \ \ \ \ \ \ \ \ \ }s\leq \nu ^{\prime }\left( f_{k}\left(
x\right) -f_{N}\left( x\right) ,t\right) \leq \nu ^{\prime }\left(
f_{k}\left( x\right) -f\left( x\right) ,\frac{t}{2}\right) \diamond \nu
^{\prime }\left( f_{N}\left( x\right) -f\left( x\right) ,\frac{t}{2}\right) 
\end{equation*}%
\begin{equation*}
<\varepsilon \diamond \varepsilon <s
\end{equation*}%
which is not possible. Hence $\ B_{x}\left( \varepsilon ,t\right) \subset
A_{x}\left( \varepsilon ,t\right) $. Therefore, by $\delta _{\lambda }\left(
A_{x}\left( \varepsilon ,t\right) \right) =0,$ $\ \delta _{\lambda }\left(
B_{x}\left( \varepsilon ,t\right) \right) =0.$ That is; $\left( f_{k}\right) 
$is a pointwise $\lambda -$statistical Cauchy sequence with respect to
intuitionistic fuzzy norm $\left( \mu ,\nu \right) .$
\end{proof}
\end{theorem}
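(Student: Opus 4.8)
The plan is to transport the classical ``convergent $\Rightarrow$ Cauchy'' argument to this setting: the two ingredients will be the triangle-type axioms (v) and (xi) of the intuitionistic fuzzy norm on $Y$, together with the monotonicity of $\ast$, $\diamond$ and the monotonicity of $\lambda$-density under set inclusion.

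First I would fix $x\in X$, $\varepsilon>0$, $t>0$, and assume $st_{\mu,\nu}^{\lambda}-f_{k}\rightarrow f$. Using continuity of $\ast$ at $(1,1)$ and of $\diamond$ at $(0,0)$ (or simply axiom (xiv), which already gives $(1-\varepsilon)\ast(1-\varepsilon)=1-\varepsilon$ and $\varepsilon\diamond\varepsilon=\varepsilon$), I would choose $s>0$ with $(1-\varepsilon)\ast(1-\varepsilon)>1-s$ and $\varepsilon\diamond\varepsilon<s$, and set
\[
A_{x}=\left\{k\in\mathbb{N}:\mu^{\prime}\!\left(f_{k}(x)-f(x),\tfrac{t}{2}\right)\leq1-\varepsilon\ \text{or}\ \nu^{\prime}\!\left(f_{k}(x)-f(x),\tfrac{t}{2}\right)\geq\varepsilon\right\}.
\]
By hypothesis $\delta_{\lambda}(A_{x})=0$, hence $\delta_{\lambda}(\mathbb{N}\setminus A_{x})=1$; in particular $\mathbb{N}\setminus A_{x}\neq\emptyset$, so I may fix an index $N=N(x,\varepsilon,t)\in\mathbb{N}\setminus A_{x}$, for which $\mu^{\prime}(f_{N}(x)-f(x),t/2)>1-\varepsilon$ and $\nu^{\prime}(f_{N}(x)-f(x),t/2)<\varepsilon$, and hence, by axioms (iv) and (x) with $a=-1$, the same bounds for $f(x)-f_{N}(x)$.

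Then I would define $B_{x}=\{k\in\mathbb{N}:\mu^{\prime}(f_{k}(x)-f_{N}(x),t)\leq1-s\ \text{or}\ \nu^{\prime}(f_{k}(x)-f_{N}(x),t)\geq s\}$ and prove $B_{x}\subseteq A_{x}$ by contraposition. If $k\notin A_{x}$ then $\mu^{\prime}(f_{k}(x)-f(x),t/2)>1-\varepsilon$ and $\nu^{\prime}(f_{k}(x)-f(x),t/2)<\varepsilon$, so writing $f_{k}(x)-f_{N}(x)=\left(f_{k}(x)-f(x)\right)+\left(f(x)-f_{N}(x)\right)$ and $t=\tfrac{t}{2}+\tfrac{t}{2}$, axiom (v) and monotonicity of $\ast$ yield
\[
\mu^{\prime}(f_{k}(x)-f_{N}(x),t)\geq\mu^{\prime}\!\left(f_{k}(x)-f(x),\tfrac{t}{2}\right)\ast\mu^{\prime}\!\left(f(x)-f_{N}(x),\tfrac{t}{2}\right)\geq(1-\varepsilon)\ast(1-\varepsilon)>1-s,
\]
and symmetrically axiom (xi) and monotonicity of $\diamond$ yield $\nu^{\prime}(f_{k}(x)-f_{N}(x),t)\leq\varepsilon\diamond\varepsilon<s$; hence $k\notin B_{x}$, so $B_{x}\subseteq A_{x}$.

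Finally, since $\lambda$-density is nonnegative and monotone under inclusion, $0\leq\delta_{\lambda}(B_{x})\leq\delta_{\lambda}(A_{x})=0$, so $\delta_{\lambda}(B_{x})=0$ (the defining limit exists by this squeeze), and since $x$, $\varepsilon$, $t$ were arbitrary this is exactly the assertion that $(f_{k})$ is a pointwise $\lambda$-statistical Cauchy sequence with respect to $(\mu,\nu)$. I do not expect a genuine obstacle; the only points needing care are the existence of $N$ (which rests on $\delta_{\lambda}(\mathbb{N}\setminus A_{x})=1$ forcing the complement to be nonempty) and the cosmetic mismatch between the constant $s$ appearing in $B_{x}$ and the constant $\varepsilon$ in the definition of $\lambda$-statistical Cauchy — but since $s$ can be taken arbitrarily small together with $\varepsilon$, obtaining the density-zero conclusion for each such $s$ is equivalent to the stated definition, so this is pure bookkeeping.
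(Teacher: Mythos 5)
Your proposal is correct and follows essentially the same route as the paper's own proof: the same choice of $s$ with $(1-\varepsilon)\ast(1-\varepsilon)>1-s$ and $\varepsilon\diamond\varepsilon<s$, the same sets $A_{x}$ and $B_{x}$ with the $t/2$ split, the selection of $N$ from the density-one complement, and the inclusion $B_{x}\subseteq A_{x}$ (you argue by contraposition where the paper argues by contradiction, which is the same computation). Your closing remark about reconciling the constant $s$ with the $\varepsilon$ in the Cauchy definition is a legitimate bookkeeping point that the paper silently elides, and you handle it correctly.
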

\end{definition}

\subsection{\textbf{Uniformly }$\protect\lambda -$\textbf{Statistical
Convergence on intuitionistic fuzzy normed spaces}}

\begin{definition}
\textbf{\ }Let $\left( X,\mu ,\nu ,\ast ,\diamond \right) $ and $\left(
Y,\mu ^{\prime },\nu ^{\prime },\ast ,\diamond \right) $ be two
intuitionistic fuzzy normed linear spaces over the same field IF and $\
f_{k}:\left( X,\mu ,\nu ,\ast ,\diamond \right) \rightarrow \left( Y,\mu
^{\prime },\nu ^{\prime },\ast ,\diamond \right) $\ be a sequence of
functions. If for every $x\in X$ and $\forall \varepsilon >0,t>0,$%
\begin{equation*}
\delta _{\lambda }\left( \left\{ k\in 
\mathbb{N}
:\mu ^{\prime }\left( f_{k}\left( x\right) -f\left( x\right) ,t\right) \leq
1-\varepsilon \text{ or }\nu ^{\prime }\left( f_{k}\left( x\right) -f\left(
x\right) ,t\right) \geq \varepsilon \right\} \right) =0,
\end{equation*}%
or equivalently%
\begin{equation*}
\delta _{\lambda }\left( \left\{ k\in 
\mathbb{N}
:\mu ^{\prime }\left( f_{k}\left( x\right) -f\left( x\right) ,t\right)
>1-\varepsilon \text{ and }\nu ^{\prime }\left( f_{k}\left( x\right)
-f\left( x\right) ,t\right) <\varepsilon \right\} \right) =1
\end{equation*}

then we say hat the sequence $f_{k}$ is uniformly $\lambda -$statistical
convergent with respect to intuitionistic fuzzy norm $\left( \mu ,\nu
\right) $ and we write it $st_{\mu ,\nu }^{\lambda }-f_{k}\rightrightarrows
f $ and
\end{definition}

\begin{remark}
If $st_{\mu ,\nu }^{\lambda }-f_{k}\rightrightarrows f,$ then $st_{\mu ,\nu
}^{\lambda }-f_{k}\rightarrow f.$
\end{remark}

\begin{lemma}
Let $f_{k}:\left( X,\mu ,\nu ,\ast ,\diamond \right) \rightarrow \left(
Y,\mu ^{\prime },\nu ^{\prime },\ast ,\diamond \right) $\ be a sequence of
functions.Then for every $\varepsilon $ \TEXTsymbol{>} 0 and $t>0$, the
following statements are equivalent:
\end{lemma}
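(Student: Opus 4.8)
The plan is to establish the Lemma by running a short cycle of implications among the five reformulations (i)--(v), using only three elementary properties of $\lambda$-density: monotonicity, namely $A\subseteq B\Rightarrow\delta_{\lambda}(A)\leq\delta_{\lambda}(B)$; finite subadditivity, namely that a finite union of sets of $\lambda$-density zero again has $\lambda$-density zero (which follows at once from $|\{k\in I_{n}:k\in A\cup B\}|\leq|\{k\in I_{n}:k\in A\}|+|\{k\in I_{n}:k\in B\}|$); and the complementation rule $\delta_{\lambda}(K)=0\Leftrightarrow\delta_{\lambda}(\mathbb{N}\setminus K)=1$. Fix $\varepsilon>0$, $t>0$ and $x\in X$, and write $K_{\mu}=\{k\in\mathbb{N}:\mu^{\prime}(f_{k}(x)-f(x),t)\leq 1-\varepsilon\}$ and $K_{\nu}=\{k\in\mathbb{N}:\nu^{\prime}(f_{k}(x)-f(x),t)\geq\varepsilon\}$. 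Then the ``or''-set from the definition of $st_{\mu,\nu}^{\lambda}-f_{k}\rightrightarrows f$ is exactly $K_{\mu}\cup K_{\nu}$, while the ``and''-set is its complement $(\mathbb{N}\setminus K_{\mu})\cap(\mathbb{N}\setminus K_{\nu})$ by De Morgan.

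With this notation the equivalences are immediate. Statement (i) is by definition $\delta_{\lambda}(K_{\mu}\cup K_{\nu})=0$, and the complementation rule together with De Morgan turns this into $\delta_{\lambda}((\mathbb{N}\setminus K_{\mu})\cap(\mathbb{N}\setminus K_{\nu}))=1$, which is (iii). For (i)$\Rightarrow$(ii) I would invoke monotonicity through $K_{\mu}\subseteq K_{\mu}\cup K_{\nu}$ and $K_{\nu}\subseteq K_{\mu}\cup K_{\nu}$, forcing $\delta_{\lambda}(K_{\mu})=\delta_{\lambda}(K_{\nu})=0$; the converse (ii)$\Rightarrow$(i) is subadditivity, $\delta_{\lambda}(K_{\mu}\cup K_{\nu})\leq\delta_{\lambda}(K_{\mu})+\delta_{\lambda}(K_{\nu})=0$. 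The passage between (ii) and (iv) is just the complementation rule applied separately to $K_{\mu}$ and to $K_{\nu}$, and (iv) combined with subadditivity applied to the two complements re-delivers (iii), so (i)--(iv) are all equivalent.

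It remains to fold in (v). Here I would unwind the definition of ordinary $\lambda$-statistical convergence of a scalar sequence: since $\mu^{\prime}(f_{k}(x)-f(x),t)\in(0,1]$ we have $|\mu^{\prime}(f_{k}(x)-f(x),t)-1|\geq\eta$ precisely when $\mu^{\prime}(f_{k}(x)-f(x),t)\leq 1-\eta$, so $st_{\lambda}-\lim\mu^{\prime}(f_{k}(x)-f(x),t)=1$ says exactly that $\delta_{\lambda}\{k:\mu^{\prime}(f_{k}(x)-f(x),t)\leq 1-\eta\}=0$ for every $\eta>0$; dually, since $\nu^{\prime}\geq 0$, the statement $st_{\lambda}-\lim\nu^{\prime}(f_{k}(x)-f(x),t)=0$ is that $\delta_{\lambda}\{k:\nu^{\prime}(f_{k}(x)-f(x),t)\geq\eta\}=0$ for every $\eta>0$. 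Matching these against (ii) (equivalently (iv), via the step above) and noting that (ii) is asserted for every $\varepsilon>0$ closes the equivalence (v)$\Leftrightarrow$(iv). I do not expect a real obstacle; the only points needing a moment's care are keeping the quantifier ``for every $\varepsilon>0$'' aligned with the arbitrary threshold $\eta$ in the definition of $st_{\lambda}$-convergence, and remembering that $\mu^{\prime}$ and $\nu^{\prime}$ are already $[0,1]$-valued, so the absolute values collapse to one-sided inequalities with no extra work.
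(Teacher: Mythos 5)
Your proof is correct; the paper in fact states this lemma (and its pointwise counterpart) without any proof, and your argument via monotonicity, finite subadditivity, and complementation of $\lambda$-density applied to $K_{\mu}$, $K_{\nu}$ and their union, followed by unwinding the scalar definition of $st_{\lambda}$-convergence for statement (v), is exactly the routine verification the authors leave implicit. The only details worth making explicit in a final write-up are that the complementation rule rests on $\left\vert I_{n}\right\vert /\lambda _{n}\rightarrow 1$ and that $\mu ^{\prime }\in (0,1]$ and $\nu ^{\prime }\in [0,1)$ justify collapsing $\left\vert \mu ^{\prime }-1\right\vert \geq \eta $ and $\left\vert \nu ^{\prime }-0\right\vert \geq \eta $ to the one-sided inequalities appearing in (ii) and (iv), both of which you already flagged.
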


(i)$st_{\mu ,\nu }^{\lambda }-f_{k}\rightrightarrows f$.

(ii)For every $x\in X,$ 
\begin{equation*}
\delta _{\lambda }\left\{ k\in 
\mathbb{N}
:\mu ^{\prime }\left( f_{k}\left( x\right) -f\left( x\right) ,t\right) \leq
1-\varepsilon \text{ }\right\} =\delta _{\lambda }\left\{ k\in 
\mathbb{N}
:\text{ }\nu ^{\prime }\left( f_{k}\left( x\right) -f\left( x\right)
,t\right) \geq \varepsilon \right\} =0
\end{equation*}

(iii)For every $x\in X,$%
\begin{equation*}
\delta _{\lambda }\left\{ k\in 
\mathbb{N}
:\mu ^{\prime }\left( f_{k}\left( x\right) -f\left( x\right) ,t\right)
>1-\varepsilon \text{ and }\nu ^{\prime }\left( f_{k}\left( x\right)
-f\left( x\right) ,t\right) <\varepsilon \right\} =1
\end{equation*}

(iv)For every $x\in X,$ 
\begin{equation*}
\delta _{\lambda }\left\{ k\in 
\mathbb{N}
:\mu ^{\prime }\left( f_{k}\left( x\right) -f\left( x\right) ,t\right)
>1-\varepsilon \text{ }\right\} =\delta _{\lambda }\left\{ k\in 
\mathbb{N}
:\text{ }\nu ^{\prime }\left( f_{k}\left( x\right) -f\left( x\right)
,t\right) <\varepsilon \right\} =1
\end{equation*}

(v)For every $x\in X,$ 
\begin{equation*}
st_{\lambda }-lim\mu ^{\prime }(f_{k}\left( x\right) -f\left( x\right) ,t)=1%
\text{ and }st_{\lambda }-lim\nu ^{\prime }(f_{k}\left( x\right) -f\left(
x\right) ,t)=0.
\end{equation*}

\begin{theorem}
Let $\left( X,\mu ,\nu ,\ast ,\diamond \right) $ be an intuitionistic fuzzy
normed space and $f_{k}:\left( X,\mu ,\nu ,\ast ,\diamond \right)
\rightarrow \left( Y,\mu ^{\prime },\nu ^{\prime },\ast ,\diamond \right) $\
be a sequence of functions.If sequence $\left( f_{k}\right) $ is uniformly
intuitionistic fuzzy convergent on $X$ to a function $f$ with respect to $%
\left( \mu ,\nu \right) ,$ then $\left( f_{k}\right) $ is uniformly $\lambda
-$statistical convergent with respect to intuitionistic fuzzy norm $\left(
\mu ,\nu \right) .$
\end{theorem}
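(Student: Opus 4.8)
The plan is to follow the template of the earlier theorem stating that pointwise intuitionistic fuzzy convergence implies pointwise $\lambda$-statistical convergence, upgrading each step so that the threshold it produces is \emph{uniform} in $x$. Concretely, I would fix $\varepsilon>0$ and $t>0$. When $0<\varepsilon<1$, the hypothesis of uniform intuitionistic fuzzy convergence (applied with $r=\varepsilon$) furnishes a \emph{single} positive integer $k_{0}=k_{0}(\varepsilon,t)$, independent of $x$, such that for all $x\in X$ and all $k>k_{0}$,
\begin{equation*}
\mu^{\prime}\left(f_{k}(x)-f(x),t\right)>1-\varepsilon\quad\text{and}\quad\nu^{\prime}\left(f_{k}(x)-f(x),t\right)<\varepsilon .
\end{equation*}
When $\varepsilon\geq 1$ there is nothing to prove, since by axioms (ii) and (viii) one always has $\mu^{\prime}>0\geq 1-\varepsilon$ and $\nu^{\prime}<1\leq\varepsilon$, so the relevant set below is empty.

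Next I would observe that, as a consequence, for \emph{every} $x\in X$ the set
\begin{equation*}
K_{x}=\left\{k\in\mathbb{N}:\mu^{\prime}\left(f_{k}(x)-f(x),t\right)\leq 1-\varepsilon\text{ or }\nu^{\prime}\left(f_{k}(x)-f(x),t\right)\geq\varepsilon\right\}
\end{equation*}
is contained in $\{1,2,\dots,k_{0}\}$, hence finite with $|K_{x}|\leq k_{0}$ uniformly in $x$. Then I would invoke the elementary fact already used in the pointwise theorem: any finite $K\subset\mathbb{N}$ has $\lambda$-density zero, because $\left|\{k\in I_{n}:k\in K\}\right|\leq|K|$ while $\lambda_{n}\to\infty$, so $\delta_{\lambda}(K)=\lim_{n\to\infty}\frac{1}{\lambda_{n}}\left|\{k\in I_{n}:k\in K\}\right|=0$. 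Applying this to each $K_{x}$ gives $\delta_{\lambda}(K_{x})=0$ for every $x\in X$, which is exactly the defining condition of uniform $\lambda$-statistical convergence; hence $st_{\mu,\nu}^{\lambda}-f_{k}\rightrightarrows f$.

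I do not anticipate any genuine obstacle. The one point to emphasize — and the sole difference from the proof of the pointwise version — is that uniform intuitionistic fuzzy convergence supplies a threshold $k_{0}$ that does not depend on $x$, so the density-zero conclusion holds simultaneously for all $x$; that is precisely what separates ``uniformly $\lambda$-statistically convergent'' from ``pointwise $\lambda$-statistically convergent.'' The reduction $r=\varepsilon$ and the trivial disposal of the case $\varepsilon\geq 1$ are the only bookkeeping details.
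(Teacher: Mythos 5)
Your proposal is correct and follows exactly the route the paper intends: the paper's own ``proof'' merely says the argument is similar to the pointwise case, and what you write is precisely that argument with the single, $x$-independent threshold $k_{0}$ supplied by uniform intuitionistic fuzzy convergence, plus the standard observation that a finite subset of $\mathbb{N}$ has $\lambda$-density zero. Your explicit handling of the case $\varepsilon\geq 1$ and your emphasis on the uniformity of $k_{0}$ are minor refinements the paper omits but nothing more.
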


\begin{proof}
Proof of this theorem is similar to proof of theorem that we have done
previously for pointwise $\lambda -statistical$ convergent.
\end{proof}

\begin{theorem}
\bigskip Let $\left( f_{k}\right) $ and $\left( g_{k}\right) $ be two
sequences of functions from intuitionistic fuzzy normed space $\left( X,\mu
,\nu ,\ast ,\diamond \right) $ to$\left( Y,\mu ^{\prime },\nu ^{\prime
},\ast ,\diamond \right) .$ If $st_{\mu ,\nu }^{\lambda
}-f_{k}\rightrightarrows f$ and $st_{\mu ,\nu }^{\lambda
}-g_{k}\rightrightarrows g$, then $st_{\mu ,\nu }^{\lambda }-\left( \alpha
f_{k}+\beta g_{k}\right) \rightrightarrows \alpha f+\beta g$ where $\alpha
,\beta \in IF$($%
\mathbb{R}
$ or $%
\mathbb{C}
$).
\end{theorem}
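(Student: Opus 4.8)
The plan is to run exactly the argument used for the pointwise addition theorem above, checking that nothing in it spoils the uniformity in $x$. First I would dispose of the degenerate cases: if $\alpha=\beta=0$ the claim is immediate, and if precisely one of $\alpha,\beta$ vanishes the argument below simplifies by discarding the corresponding set. So assume $\alpha\neq 0$, $\beta\neq 0$, and fix $\varepsilon>0$ and $t>0$.

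Next, invoking the hypotheses $st_{\mu,\nu}^{\lambda}-f_{k}\rightrightarrows f$ and $st_{\mu,\nu}^{\lambda}-g_{k}\rightrightarrows g$, I would set, for $x\in X$, $A_{1}=\{k\in\mathbb{N}:\mu^{\prime}(f_{k}(x)-f(x),\frac{t}{2|\alpha|})\leq 1-\varepsilon$ or $\nu^{\prime}(f_{k}(x)-f(x),\frac{t}{2|\alpha|})\geq\varepsilon\}$ and $A_{2}=\{k\in\mathbb{N}:\mu^{\prime}(g_{k}(x)-g(x),\frac{t}{2|\beta|})\leq 1-\varepsilon$ or $\nu^{\prime}(g_{k}(x)-g(x),\frac{t}{2|\beta|})\geq\varepsilon\}$. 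By the definition of uniform $\lambda$-statistical convergence both sets have $\lambda$-density $0$, hence $A:=A_{1}\cup A_{2}$ satisfies $\delta_{\lambda}(A)=0$, so in particular $A^{c}$ is nonempty.

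The core step is to verify the inclusion $A^{c}\subset B$, where $B=\{k\in\mathbb{N}:\mu^{\prime}((\alpha f_{k}+\beta g_{k})(x)-(\alpha f(x)+\beta g(x)),t)>1-\varepsilon$ and $\nu^{\prime}((\alpha f_{k}+\beta g_{k})(x)-(\alpha f(x)+\beta g(x)),t)<\varepsilon\}$. Given $k\in A^{c}$ one has $\mu^{\prime}(f_{k}(x)-f(x),\frac{t}{2|\alpha|})>1-\varepsilon$, $\nu^{\prime}(f_{k}(x)-f(x),\frac{t}{2|\alpha|})<\varepsilon$, and the analogous inequalities for $g_{k}$. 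Writing $(\alpha f_{k}+\beta g_{k})(x)-(\alpha f(x)+\beta g(x))=(\alpha f_{k}(x)-\alpha f(x))+(\beta g_{k}(x)-\beta g(x))$, I would apply the triangle axioms (v) and (xi) with the split $t=\frac{t}{2}+\frac{t}{2}$, then the homogeneity axioms (iv) and (x) to absorb $|\alpha|$ and $|\beta|$, and finally the idempotency axiom (xiv), $(1-\varepsilon)\ast(1-\varepsilon)=1-\varepsilon$ and $\varepsilon\diamond\varepsilon=\varepsilon$, to conclude $k\in B$. Hence $B^{c}\subset A$, and monotonicity of $\lambda$-density gives $\delta_{\lambda}(B^{c})\leq\delta_{\lambda}(A)=0$. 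Since this holds for every $x\in X$, we obtain $st_{\mu,\nu}^{\lambda}-(\alpha f_{k}+\beta g_{k})\rightrightarrows\alpha f+\beta g$.

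I do not expect a genuine obstacle here: the argument is essentially identical to the pointwise addition theorem proved above, and the only things to watch are the constant bookkeeping ($\frac{t}{2}$ versus $\frac{t}{2|\alpha|}$ and $\frac{t}{2|\beta|}$) in the triangle inequalities, and the observation that, because the paper's notion of uniform $\lambda$-statistical convergence requires the density statement separately at each $x$, no new uniformity estimate is needed beyond what the pointwise case already supplies.
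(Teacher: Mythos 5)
Your proposal is correct and follows essentially the argument the paper intends: the paper omits the proof of this theorem, implicitly deferring to the pointwise addition theorem, and your write-up is precisely that argument (splitting $t=\frac{t}{2}+\frac{t}{2}$, using homogeneity to pass to $\frac{t}{2\left\vert \alpha \right\vert }$ and $\frac{t}{2\left\vert \beta \right\vert }$, and the idempotency axiom (xiv)). Your closing remark is also accurate: since the paper's definition of uniform $\lambda$-statistical convergence is stated pointwise in $x$, no additional uniformity estimate is required.
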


\begin{example}
Let $\left( 
\mathbb{R}
,\left\vert \cdot \right\vert \right) $ denote the space of real numbers
with the usual norm, and let $a\ast b=a.b$ and $a\diamond b=\min \left\{
a+b,1\right\} $ for $a,b\in \left[ 0,1\right] $. For all $x\in 
\mathbb{R}
$ and every $t>0,$ consider%
\begin{equation*}
\mu \left( x,t\right) =\frac{t}{t+\left\vert x\right\vert }\text{ and }\nu
\left( x,t\right) =\frac{\left\vert x\right\vert }{t+\left\vert x\right\vert 
}
\end{equation*}%
Let $f_{k}:\left[ 0,1\right] \rightarrow 
\mathbb{R}
$ be a sequence of functions whose terms are given by%
\begin{equation*}
f_{k}(x)=\left\{ 
\begin{array}{c}
\text{ \ \ \ \ }x^{k}+1\text{, \ \ \ \ \ \ \ \ \ if }n-\sqrt{\lambda _{n}}%
+1\leq k\leq n \\ 
\text{ }0\text{\ ,\ \ \ \ \ \ \ \ \ \ \ \ \ \ \ \ \ otherwise}%
\end{array}%
\right. \text{\ \ \ .\ \ \ \ \ \ \ }
\end{equation*}%
since $K\left( \varepsilon ,t\right) =\left\{ k\in I_{n}:\text{ }\mu \left(
f_{k}(x)-f(x),t\right) \leq 1-\varepsilon \text{ or }\nu \left(
f_{k}(x)-f(x),t\right) \geq \varepsilon \right\} $ ,hence 
\begin{eqnarray*}
K\left( \varepsilon ,t\right)  &=&\left\{ k\in I_{n}:\frac{t}{t+\left\vert
f_{k}(x)-0\right\vert }\leq 1-\varepsilon \text{ or }\frac{\left\vert
f_{k}(x)-0\right\vert }{t+\left\vert f_{k}(x)-0\right\vert }\geq \varepsilon
\right\}  \\
&=&\left\{ k\in I_{n}:\left\vert f_{k}(x)\right\vert \geq \frac{\varepsilon t%
}{1-\varepsilon }\right\}  \\
&=&\left\{ k\in I_{n}:f_{k}(x)=x^{k}+1\right\} 
\end{eqnarray*}%
and 
\begin{equation*}
\left\vert K\left( \varepsilon ,t\right) \right\vert \leq \sqrt{_{\lambda
_{n}}}
\end{equation*}%
Thus, for $0\leq x\leq 1$, since 
\begin{equation*}
\delta _{\lambda }\left( K\left( \varepsilon ,t\right) \right) =\underset{%
n\rightarrow \infty }{\lim }\frac{\left\vert K\left( \varepsilon ,t\right)
\right\vert }{\lambda _{n}}=\underset{n\rightarrow \infty }{\lim }\frac{%
\sqrt{_{\lambda _{n}}}}{\lambda _{n}}=0
\end{equation*}%
$f_{k}$ is uniform $\lambda -statistical$ convergent to $0$ with respect to
intuitionistic fuzzy norm $(\mu ,\nu )$.
\end{example}

\begin{definition}
\bigskip\ \ Let $f_{k}:\left( X,\mu ,\nu ,\ast ,\diamond \right) \rightarrow
\left( Y,\mu ^{\prime },\nu ^{\prime },\ast ,\diamond \right) $\ be a
sequence of functions.The sequence $\left( f_{k}\right) $ is a uniformly $%
\lambda -$statistical Cauchy sequence in intuitionistic fuzzy normed space
provided that for every $\varepsilon >0$ and $t>0$ there exists a number $%
N=N(\varepsilon ,t)$ such that

\begin{equation*}
\ \delta _{\lambda }\left( \left\{ k\in 
\mathbb{N}
:\mu ^{\prime }\left( f_{k}\left( x\right) -f_{N}\left( x\right) ,t\right)
\leq 1-\varepsilon \text{ or }\nu ^{\prime }\left( f_{k}\left( x\right)
-f_{N}\left( x\right) ,t\right) \geq \varepsilon \text{ for every }x\in
X\right\} \right) =0.
\end{equation*}

\begin{theorem}
Let $f_{k}:\left( X,\mu ,\nu ,\ast ,\diamond \right) \rightarrow \left(
Y,\mu ^{\prime },\nu ^{\prime },\ast ,\diamond \right) $\ be a sequence of
functions.If $\left( f_{k}\right) $ is a uniformly $\lambda -$statistically
convergent sequence with respect to intuitionistic fuzzy norm $\left( \mu
,\nu \right) $, then $\left( f_{k}\right) $is a uniformly $\lambda -$%
statistical Cauchy sequence with respect to intuitionistic fuzzy norm $%
\left( \mu ,\nu \right) .$
\end{theorem}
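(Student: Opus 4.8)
The plan is to re-run, almost verbatim, the argument given just above for the pointwise case, the one new feature being that the Cauchy-index $N$ must now be extracted so as to work uniformly in $x$. So I would start by assuming $st_{\mu,\nu}^{\lambda}-f_{k}\rightrightarrows f$, fixing $\varepsilon>0$ and $t>0$, and (as in the pointwise theorem) choosing $s>0$ with $(1-\varepsilon)\ast(1-\varepsilon)>1-s$ and $\varepsilon\diamond\varepsilon<s$; such an $s$ exists by axiom (xiv).

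Next I would put
\[
A(\varepsilon,t)=\Bigl\{k\in\mathbb{N}:\ \mu'\bigl(f_{k}(x)-f(x),\tfrac{t}{2}\bigr)\le 1-\varepsilon\ \text{or}\ \nu'\bigl(f_{k}(x)-f(x),\tfrac{t}{2}\bigr)\ge\varepsilon\ \text{for some }x\in X\Bigr\},
\]
so that $\delta_{\lambda}(A(\varepsilon,t))=0$ by hypothesis and therefore $\delta_{\lambda}(A^{c}(\varepsilon,t))=1$; in particular $A^{c}(\varepsilon,t)\neq\varnothing$, and I would fix once and for all some $N\in A^{c}(\varepsilon,t)$. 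The crucial observation is that this $N$ depends only on $\varepsilon$ and $t$ and that $\mu'(f_{N}(x)-f(x),t/2)>1-\varepsilon$, $\nu'(f_{N}(x)-f(x),t/2)<\varepsilon$ then hold simultaneously for every $x\in X$. I would then set
\[
B(\varepsilon,t)=\Bigl\{k\in\mathbb{N}:\ \mu'\bigl(f_{k}(x)-f_{N}(x),t\bigr)\le 1-s\ \text{or}\ \nu'\bigl(f_{k}(x)-f_{N}(x),t\bigr)\ge s\ \text{for some }x\in X\Bigr\}
\]
and prove $B(\varepsilon,t)\subset A(\varepsilon,t)$: if $k\in B(\varepsilon,t)\setminus A(\varepsilon,t)$, I would pick an $x$ witnessing $k\in B(\varepsilon,t)$, note that $k\notin A(\varepsilon,t)$ forces $\mu'(f_{k}(x)-f(x),t/2)>1-\varepsilon$ and $\nu'(f_{k}(x)-f(x),t/2)<\varepsilon$ for that same $x$, and then, decomposing $f_{k}(x)-f_{N}(x)=\bigl(f_{k}(x)-f(x)\bigr)+\bigl(f(x)-f_{N}(x)\bigr)$ and using the triangle inequalities (v), (xi) together with the symmetry (iv), (x) for $\mu'$, $\nu'$, obtain $\mu'(f_{k}(x)-f_{N}(x),t)\ge(1-\varepsilon)\ast(1-\varepsilon)>1-s$ and $\nu'(f_{k}(x)-f_{N}(x),t)\le\varepsilon\diamond\varepsilon<s$, contradicting $k\in B(\varepsilon,t)$. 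Hence $\delta_{\lambda}(B(\varepsilon,t))=0$, and since $\varepsilon,t>0$ were arbitrary while $N=N(\varepsilon,t)$, this is precisely the assertion that $(f_{k})$ is a uniformly $\lambda-$statistical Cauchy sequence with respect to $(\mu,\nu)$.

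The only step that requires real care is the quantifier bookkeeping around ``for some $x\in X$'': one must make sure that the single index $N$ drawn from $A^{c}(\varepsilon,t)$ is simultaneously good at every $x$ --- which is exactly the place where uniform, rather than merely pointwise, convergence is needed --- and that the $x$ used to witness $k\in B(\varepsilon,t)$ is the same $x$ at which one invokes $k\notin A(\varepsilon,t)$. Once that is kept straight, the two t-norm / t-conorm estimates are word-for-word the ones already carried out in the pointwise theorem, so no additional computation is involved.
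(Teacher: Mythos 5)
Your proof is correct and follows exactly the route the paper intends: the paper's own ``proof'' of this theorem consists solely of the remark that it is similar to the pointwise case, and you have carried out that adaptation faithfully, correctly isolating the one genuinely new point --- that the index $N$ chosen from $A^{c}(\varepsilon ,t)$ is simultaneously good at every $x\in X$ because the exceptional set is formed uniformly in $x$. Nothing further is needed.
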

\end{definition}

\begin{proof}
Proof of this theorem is similar to proof of theorem that we have done
previously for pointwise $\lambda -statistical$ convergent
\end{proof}

\begin{definition}
\bigskip Let $\left( X,\mu ,\nu ,\ast ,\diamond \right) $ and $\left( Y,\mu
^{\prime },\nu ^{\prime },\ast ,\diamond \right) $ be two intuitionistic
fuzzy normed space, and $F$ a family of functions from $X$ to $Y$. \ The
family $F$ is intuitionistic fuzzy equicontinuous at a point $x_{0}\in X$ if
for every $\varepsilon >0$ and $t>0$, there exists a $\delta >0$ such that $%
\mu ^{\prime }\left( f\left( x_{0}\right) -f\left( x\right) ,t\right)
>1-\varepsilon $ and $\nu ^{\prime }\left( f\left( x_{0}\right) -f\left(
x\right) ,t\right) <\varepsilon $ for all $f\in F$ and all $x$ such that$\mu
^{\prime }\left( x_{0}-x,t\right) >1-\delta $ and $\nu ^{\prime }\left(
x_{0}-x,t\right) <\delta $. The family is intuitionistic fuzzy
equicontinuous if it is equicontinuous at each point of $X.($For continuity, 
$\delta $ may depend on $\varepsilon $, $x_{0}$ and $f$; for equicontinuity, 
$\delta $ must be independent of $f$)
\end{definition}

\begin{theorem}
\bigskip let $\left( X,\mu ,\nu ,\ast ,\diamond \right) ,\left( Y,\mu
^{\prime },\nu ^{\prime },\ast ,\diamond \right) $ be intuitionistic fuzzy
normed space.Assume that $st_{\mu ,\nu }^{\lambda }-f_{k}\rightarrow f$ \ on 
$X$ where functions $f_{k}:\left( X,\mu ,\nu ,\ast ,\diamond \right)
\rightarrow \left( Y,\mu ^{\prime },\nu ^{\prime },\ast ,\diamond \right)
,k\in 
\mathbb{N}
,$ are intuitionistic fuzzy equi-continuous on $X$ and $f:X\rightarrow Y.$
Then $f$ is continuous on $X.$
\end{theorem}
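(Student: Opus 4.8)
The plan is to establish continuity of $f$ at an arbitrary fixed point $x_{0}\in X$ by the classical ``$\varepsilon/3$'' decomposition, letting the idempotency axiom (xiv) play the role that the identity $t/3+t/3+t/3=t$ plays in the ordinary normed setting. First I would fix $x_{0}$, let $\varepsilon>0$ and $t>0$ be given, and invoke the intuitionistic fuzzy equicontinuity of $\{f_{k}\}_{k\in\mathbb{N}}$ at $x_{0}$, applied with codomain parameter $t/3$: this yields a $\delta>0$ such that for every $k$ and every $x$ with $\mu(x_{0}-x,t/3)>1-\delta$ and $\nu(x_{0}-x,t/3)<\delta$ one has $\mu'(f_{k}(x_{0})-f_{k}(x),t/3)>1-\varepsilon$ and $\nu'(f_{k}(x_{0})-f_{k}(x),t/3)<\varepsilon$. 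This $\delta$ (together with the domain parameter $t/3$) will serve as the modulus of continuity of $f$ at $x_{0}$.

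Next, for a fixed such $x$, the crucial step is to produce a single index $k$ that is simultaneously ``good'' for the statistical convergence at $x_{0}$ and at $x$. Applying the definition of $st_{\mu,\nu}^{\lambda}$-convergence at the point $x_{0}$ with parameters $\varepsilon,t/3$ shows that $G_{x_{0}}=\{k:\mu'(f_{k}(x_{0})-f(x_{0}),t/3)>1-\varepsilon\text{ and }\nu'(f_{k}(x_{0})-f(x_{0}),t/3)<\varepsilon\}$ has $\lambda$-density $1$, and applying it at $x$ shows that the analogous set $G_{x}$ has $\lambda$-density $1$. Since the complement of $G_{x_{0}}\cap G_{x}$ is $G_{x_{0}}^{c}\cup G_{x}^{c}$, a union of two sets of $\lambda$-density $0$, and $\lambda$-density is subadditive (because $\frac{1}{\lambda_{n}}|\{k\in I_{n}:k\in A\cup B\}|\le\frac{1}{\lambda_{n}}|\{k\in I_{n}:k\in A\}|+\frac{1}{\lambda_{n}}|\{k\in I_{n}:k\in B\}|$), the intersection $G_{x_{0}}\cap G_{x}$ has $\lambda$-density $1$ and is in particular nonempty. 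I would pick any $k$ in it; note that $k$ may depend on $x$, which is harmless since the conclusion about $f$ does not mention $k$.

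With that $k$ fixed, I would write $f(x_{0})-f(x)=[f(x_{0})-f_{k}(x_{0})]+[f_{k}(x_{0})-f_{k}(x)]+[f_{k}(x)-f(x)]$ and chain the triangle inequalities (v) and (xi) twice, using monotonicity of $\ast,\diamond$ and the idempotency (xiv):
\begin{align*}
\mu'(f(x_{0})-f(x),t)
&\ge \mu'(f(x_{0})-f_{k}(x_{0}),t/3)\ast \mu'(f_{k}(x_{0})-f_{k}(x),t/3)\ast \mu'(f_{k}(x)-f(x),t/3)\\
&\ge (1-\varepsilon)\ast(1-\varepsilon)\ast(1-\varepsilon)=1-\varepsilon,\\
\nu'(f(x_{0})-f(x),t)
&\le \nu'(f(x_{0})-f_{k}(x_{0}),t/3)\diamond \nu'(f_{k}(x_{0})-f_{k}(x),t/3)\diamond \nu'(f_{k}(x)-f(x),t/3)\\
&\le \varepsilon\diamond\varepsilon\diamond\varepsilon=\varepsilon.
\end{align*}
Thus $\mu'(f(x_{0})-f(x),t)\ge 1-\varepsilon$ and $\nu'(f(x_{0})-f(x),t)\le\varepsilon$ for all $x$ in the prescribed $\delta$-neighbourhood of $x_{0}$; since $x_{0}$ and $\varepsilon,t$ were arbitrary, $f$ is intuitionistic fuzzy continuous on $X$. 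If the strict inequalities of the continuity definition are required, it suffices to run the whole argument with an auxiliary tolerance $\varepsilon'\in(0,\varepsilon)$ in place of $\varepsilon$, so that the last estimates become $\ge 1-\varepsilon'>1-\varepsilon$ and $\le\varepsilon'<\varepsilon$.

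The only genuinely nontrivial point — and the one place where ``statistical'' rather than ordinary convergence needs attention — is the second paragraph: ordinary pointwise convergence would let us take $k$ large enough to control the endpoint errors at $x_{0}$ and $x$ at once, whereas here we must instead note that two $\lambda$-density-$1$ ``good'' sets have (density-$1$, hence nonempty) intersection. Everything else is routine $\varepsilon/3$ bookkeeping, with axiom (xiv) supplying the collapse $(1-\varepsilon)^{\ast 3}=1-\varepsilon$ and $\varepsilon^{\diamond 3}=\varepsilon$ that makes the estimates close.
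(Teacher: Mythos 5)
Your proposal is correct and follows essentially the same route as the paper's own proof: equicontinuity applied with parameter $t/3$ to get the modulus $\delta$, the observation that the two $\lambda$-density-zero ``bad'' sets at $x$ and $x_{0}$ have a union of density zero (hence a common good index exists), and the threefold decomposition $f(x)-f_{k}(x)+f_{k}(x)-f_{k}(x_{0})+f_{k}(x_{0})-f(x_{0})$ closed up by axiom (xiv). Your remark about replacing $\varepsilon$ by an auxiliary $\varepsilon'<\varepsilon$ to recover strict inequalities is a small refinement the paper silently omits, but it does not change the argument.
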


\begin{proof}
Let $x_{0}\in X$ be an arbitrary point. By the intuitionistic fuzzy
equi-continuity of $f_{k}$'s,for every $\varepsilon >0$ and $t>0$ there
exist $\delta =\delta \left( x_{0},\varepsilon ,\frac{t}{3}\right) >0$ such
that 
\begin{equation*}
\mu ^{\prime }\left( f_{k}\left( x_{0}\right) -f_{k}\left( x\right) ,\frac{t%
}{3}\right) >1-\varepsilon \text{ and }\nu ^{\prime }\left( f_{k}\left(
x_{0}\right) -f_{k}\left( x\right) ,\frac{t}{3}\right) <\varepsilon 
\end{equation*}%
for every $k\in 
\mathbb{N}
$ and all $x$ such that $\mu ^{\prime }\left( x_{0}-x,t\right) >1-\delta $
and $\nu ^{\prime }\left( x_{0}-x,t\right) <\delta $. Let $x\in B\left(
x_{0},\delta ,t\right) $ be fixed.Since $st_{\mu ,\nu }^{\lambda
}-f_{k}\rightarrow f$ \ on $X,$for each $x\in X$, if we state respectively $%
A_{x}\left( \varepsilon ,t\right) $ and $B$ $_{x}\left( \varepsilon
,t\right) $ by the sets 
\begin{equation*}
A_{x}\left( \varepsilon ,t\right) =\left\{ k\in 
\mathbb{N}
:\mu ^{\prime }\left( f_{k}\left( x\right) -f\left( x\right) ,\frac{t}{3}%
\right) \leq 1-\varepsilon \text{ or }\nu ^{\prime }\left( f_{k}\left(
x\right) -f\left( x\right) ,\frac{t}{3}\right) \geq \varepsilon \text{ for
each }x\in X\right\} 
\end{equation*}

and 
\begin{equation*}
B_{x}\left( \varepsilon ,t\right) =\left\{ k\in 
\mathbb{N}
:\mu ^{\prime }\left( f_{k}\left( x_{0}\right) -f\left( x_{0}\right) ,\frac{t%
}{3}\right) \leq 1-\varepsilon \text{ or }\nu ^{\prime }\left( f_{k}\left(
x_{0}\right) -f\left( x_{0}\right) ,\frac{t}{3}\right) \geq \varepsilon 
\text{ for each }x\in X\right\} 
\end{equation*}%
then, $\delta _{\lambda }\left( A_{x}\left( \varepsilon ,t\right) \right) =0$
and $\delta _{\lambda }\left( B_{x}\left( \varepsilon ,t\right) \right) =0,$
hence$\delta _{\lambda }\left( A_{x}\left( \varepsilon ,t\right) \cup
B_{x}\left( \varepsilon ,t\right) \right) =0$ and $A_{x}\left( \varepsilon
,t\right) \cup B_{x}\left( \varepsilon ,t\right) $ is different fom $%
\mathbb{N}
.$ Thus, there exists $\exists m\in 
\mathbb{N}
$ such that 
\begin{equation*}
\mu ^{\prime }\left( f_{m}\left( x\right) -f\left( x\right) ,\frac{t}{3}%
\right) >1-\varepsilon \ ,\text{ }\nu ^{\prime }\left( f_{m}\left( x\right)
-f\left( x\right) ,\frac{t}{3}\right) <\varepsilon 
\end{equation*}

and%
\begin{equation*}
\mu ^{\prime }\left( f_{m}\left( x_{0}\right) -f\left( x_{0}\right) ,\frac{t%
}{3}\right) >1-\varepsilon \ ,\text{ }\nu ^{\prime }\left( f_{m}\left(
x_{0}\right) -f\left( x_{0}\right) ,\frac{t}{3}\right) <\varepsilon \text{.}
\end{equation*}

Now, we will show that $f$ is intuitionistic fuzzy contunious at $x_{0}.$%
Since $st_{\mu ,\nu }^{\lambda }-f_{k}\rightarrow f$ \ and  for every $k\in 
\mathbb{N}
$ $\ f_{k}$'s\ \ are continuous, $f_{m}$\ is also continuous for $m\in 
\mathbb{N}
$, we have

\begin{eqnarray*}
\mu \prime \left( f\left( x\right) -f\left( x_{0}\right) ,t\right)  &=&\mu
\prime \left( f\left( x\right)
-f_{m}(x)+f_{m}(x)-f_{m}(x_{0})+f_{m}(x_{0})-f\left( x_{0}\right) ,t\right) 
\\
&\geq &\mu \prime \left( f\left( x\right) -f_{m}(x),\frac{t}{3}\right) \ast
\mu \prime \left( f_{m}(x)-f_{k}(x_{0}),\frac{t}{3}\right) \ast \mu \prime
\left( f_{m}(x_{0})-f\left( x_{0}\right) ,\frac{t}{3}\right)  \\
&>&1-\varepsilon \ast 1-\varepsilon \ast 1-\varepsilon  \\
&=&1-\varepsilon 
\end{eqnarray*}

$\ $and

\begin{eqnarray*}
\nu \prime \left( f\left( x\right) -f\left( x_{0}\right) ,t\right)  &=&\nu
\prime \left( f\left( x\right)
-f_{m}(x)+f_{m}(x)-f_{m}(x_{0})+f_{m}(x_{0})-f\left( x_{0}\right) ,t\right) 
\\
&\leq &\nu \prime \left( f\left( x\right) -f_{m}(x),\frac{t}{3}\right) \ast
\nu \prime \left( f_{m}(x)-f_{m}(x_{0}),\frac{t}{3}\right) \ast \nu \prime
\left( f_{m}(x_{0})-f\left( x_{0}\right) ,\frac{t}{3}\right)  \\
&<&\varepsilon \diamond \varepsilon \diamond \varepsilon  \\
&=&\varepsilon .
\end{eqnarray*}

Thus,\ the proof is completed.
\end{proof}

\bigskip

\bigskip


\begin{thebibliography}{99}
\bibitem{Zadeh} Zadeh LA. Fuzzy sets. Inform Control 1965;8:338--53.

\bibitem{Barros} Barros LC, Bassanezi RC, Tonelli PA. Fuzzy modelling in
population dynamics. Ecol Model 2000;128:27--33.

\bibitem{Feng} Feng G, Chen G. Adaptative control of discrete-time chaotic
systems: a fuzzy control approach. Chaos, Solitons \& Fractals
2005;253:459--67.

\bibitem{Fradkov} Fradkov AL, Evans RJ. Control of chaos: methods and
applications in engineering. Chaos, Solitons \& Fractals 2005;29:33--56.

\bibitem{Giles} Giles R. A computer program for fuzzy reasoning. Fuzzy Sets
Syst 1980;4:221--34.

\bibitem{Hong} Hong L, Sun JQ. Bifurcations of fuzzy nonlinear dynamical
systems. Commun Nonlinear Sci Numer Simul 2006;1:1--12.

\bibitem{Atanassov} K. Atanassov, Intuitionistic fuzzy sets. Fuzzy Sets
Syst, 20 (1986), pp. 87--96.

\bibitem{Park} J.H. Park, Intuitionistic fuzzy metric spaces. Chaos,
Solitons \& Fractals, 22 (2004), pp. 1039--1046.

\bibitem{Saadati} R. Saadati and J.H. Park, On the intuitionistic fuzzy
topological spaces. Chaos, Solitons \& Fractals, 27 (2006), pp. 331--344.

\bibitem{mursaleen} M. Mursaleen, Q.M. Danish Lohani, Baire's and Cantor's
theorems in intuitionistic fuzzy 2-metric spaces \ Chaos, Solitons \&
Fractals, Volume 42, Issue 4, 30 November 2009, Pages 2254-2259

\bibitem{Mursaleen} M. Mursaleen, Q.M. Danish Lohani, S.A. Mohiuddine,
Intuitionistic fuzzy 2-metric space and its completion, Chaos, Solitons \&
Fractals, Volume 42, Issue 2, 30 October 2009, Pages 1258-1265

\bibitem{Muhammed} Mursaleen M. $\lambda $-Statistical convergence. Math
Slovaca 2000;50:111--5.

\bibitem{Rsaadati} R. Saadati,Intuitionistic Fuzzy Euclidean Normed Spaces,
Communications in Mathematical Analysis Volume 1, Number 2, pp. 85--90, 2006

\bibitem{jebril} T. K. Samanta and I. H. Jebril, Finite Dimensional
Intuitionistic Fuzzy Normed Linear Space, Int. J. Open Problems Compt.
Math., Vol. 2, No. 4, December 2009.

\bibitem{Fast} H. Fast, Sur la convergence statistique, Colloq. Math. 2
(1951) 241--244.

\bibitem{Karakus} S. Karakus, K. Demirci, O. Duman ,Statistical convergence
on intuitionistic fuzzy normed spaces, Chaos Solitons \& Fractals, vol. 35,
no. 4, pp. 763-769, 2008

\bibitem{Lohani} S.A. Mohiuddine, Q.M. Danish Lohani, On generalized
statistical convergence in intuitionistic fuzzy normed space,Chaos, Solitons
\& Fractals,Volume 42, Issue 3, 15 November 2009, Pages 1731-1737

\bibitem{schw} B. Schweizer, A. Sklar, Statistical metric spaces, Pacific J.
Math. 10 (1960) 313 334.

\bibitem{Dinda} B. Dinda, T.K. Samanta, Intuitionistic Fuzzy Continuty and
Uniform Convergence, Int. J. Open Problems Compt. Math., Vol.3, No.1,2010.

\bibitem{freedman} A. R. Freedman, J.J. Sember, M. Raphael, Some Cesaro type
summability spaces, Proc. London Math. Soc.,37(1978) 508-520

\bibitem{Fridy} Fridy JA. On statistical convergence. Analysis
1985;5:301--13.

\bibitem{vkkaya} V. Karakaya, N. Simsek, M. Erturk and F. Gursoy,
Statistical convergence of function sequence in intuitionistic fuzzy normed
spaces, Journal of The Franklin Institute( Submited).
\end{thebibliography}
\end{document}